\def\RR{\mathbb{R}}
\DeclareMathOperator*{\argmax}{arg\,max}
\def\muPDE{$\mu$\hspace{-0.3mm}PDE}
\def\muEVP{$\mu$\hspace{-0.3mm}EVP}
\def\anzEW{K}
\def\TOL{\mathrm{\varepsilon_\mathrm{tol}}}
\def\epslambda{\mathrm{\varepsilon_\mathrm{\lambda}}}
\def\epsproj{\mathrm{\varepsilon_\mathrm{proj}}}
\def\muRef{{\hat{\mu}}}
\def\aRef{{\hat{a}}}
\def\eRef{{\hat{e}}}
\def\ARef{{\hat{A}}}
\def\aNorm#1{\left\|#1\right\|_{\mu;V}}
\def\aNormRef#1{\left\|#1\right\|_{\muRef; V}}
\def\aNormDual#1{\left\|#1\right\|_{\mu; V'}}
\def\aNormRefDual#1{\left\|#1\right\|_{\muRef; V'}}
\def\Kcal{\mathcal K}
\def\lambdaRB#1{\lambda_{\mathrm{red},\,#1}}
\def\lambdaRBnoindex{\lambda_{\mathrm{red}}}
\def\uRB#1{u_{\mathrm{red},\,#1}}
\def\uRBnoindex{u_{\mathrm{red}}}
\def\KcalRB#1{\mathcal K_{\mathrm{red},\,#1}}
\def\dtilde{\tilde{d}}
\def\d{d}
\newtheorem{thrm}{Theorem}[section]
\newtheorem{rmrk}[thrm]{Remark}
\newtheorem{crllr}[thrm]{Corollary}
\begin{document}
\title{ Simultaneous Reduced Basis Approximation\\of Parameterized Elliptic Eigenvalue Problems
\thanks{We would like to gratefully acknowledge the funds provided by the “Deutsche Forschungsgemeinschaft” under the contract/grant numbers: WO-671/13-1.}}
\date{}
\author{
{\sc
 Thomas Horger \thanks{Corresponding author. Email: horger@ma.tum.de}, Barbara Wohlmuth \thanks{Email: wohlmuth@ma.tum.de} ,Thomas Dickopf\thanks{Email: dickopf@ma.tum.de}
 }\\[2pt]
  M2 -- Zentrum Mathematik, Technische Universit\"at M\"unchen,\\ Boltzmannstra\ss{}e 3, 85748 Garching, Germany\\
}
\maketitle

\begin{abstract}
The focus is on a model reduction framework for parameterized elliptic eigenvalue problems
by a reduced basis method.
In contrast to
the standard single output case,
one is interested in approximating several outputs
simultaneously,
namely a certain number of the smallest eigenvalues.
For a fast and reliable evaluation of these input-output relations,
we analyze a~posteriori error estimators for eigenvalues. 
Moreover,
we present different greedy strategies and study systematically their performance.
Special attention needs to be paid to multiple eigenvalues
whose appearance is parameter-dependent.
Our methods are of particular interest for applications in vibro-acoustics.
\end{abstract}

\section{Introduction}
\label{sec:intro}

For the fast and reliable evaluation of input-output relations
for parameterized partial differential equations
(\muPDE s),
reduced basis methods
have been developed over the last decade;
see, e.\,g., \cite{RoHuPa08,QaMaNe15}
or \cite[Chapter~19]{Qua14}
for comprehensive reviews, with the first reduced basis problem being investigated in the 1980's \cite{NoPe80}.
The methodology has been applied successfully to many different problem classes
both in the real-time and the many-query context.
These problem classes include among others finite element discretizations of elliptic equations \cite{RoHuPa08},
parabolic equations \cite{GrPa05,RoMaMa06, UrAn14} and
hyperbolic equations \cite{HaOh08, DaPlWe14}. Furthermore the reduced basis method has been extended to Stokes problems \cite{LoMaRo06,RoVe07,IQRV14, RoHuMa13} as well as to variational inequalities \cite{HaSaWo12, GU14} with a time-space formulation of the problem and corresponding analysis. 
It was also applied to Stochastic Processes in \cite{UrWi12, ElLi13} and to a finite volume scheme of a parameterized and highly nonlinear convection-diffusion problem with discontinuous solutions in 
\cite{DrHaOh11}.

A~posteriori error estimators w.\,r.\,t.\ parameter variations
generally facilitate
the construction of reduced basis spaces
by greedy algorithms
as well as the certification of the outputs of the reduced models.
Different greedy methods for reduced basis and error estimators have been introduced in \cite{MaPaTu02,MaPaTu002,UrVoZe14}, also a greedy method for eigenvalues is introduced in  \cite{ChEhLe13}, and the convergence of greedy methods has been analyzed in \cite{BICoDaDePeWo11,BuMaPaPrTu12,DePeWo13}. Another way to construct a reduced space is the proper orthogonal decomposition (POD) method, as discussed, e.\,g., in \cite{KaVo07,Qua14}.

The problem class of parameterized elliptic eigenvalue problems (\muEVP s) is highly important but up to now only marginally investigated
in the context of reduced basis methods.
The first approach \cite{MaMaOlPaRo00} from the year 2000,
which is based on \cite{MaPaPe99} among others,
is restricted to the special case of an estimator for the first eigenvalue.
In the following publications \cite{Pau07,Pau07a,Pau08}, the method from \cite{MaMaOlPaRo00} is developed further to include several eigenvalues.

However,
both the analysis and the algorithms do not cover the case of multiple eigenvalues.
Quite often, the ``vectorial approach'',
i.\,e., the treatment of the eigenvectors $\left(u_i(\mu)\right)_{1\leq i\leq\anzEW}$ as an $(\text{FE dimension}  \cdot \anzEW)$-dimensional object
and building the approximation space accordingly (cf.\ \cite[Section 2.3.5]{Pau07}),
results in poor accuracy.
This is due to the fact that the possible savings from reduced problems of smaller size
seem marginal if achievable at all. In addition high-frequency information
can and should be exploited
for the approximation of low-frequency information,
an effect that is expected to become more and more important
with increasing number of desired eigenvalues.
In \cite{ZaVe13},
an elastic buckling problem is studied.
While the model reduction is carried out solely/primarily for a linear problem,
the eigenvalue problem appears only in a second step.

Here,
a new RB space is built from the eigenfunctions associated with the smallest eigenvalues
at the previously identified parameters.
A non-rigorous a~posteriori bound is then computed by comparison with a reduced space approximation of double size;
cf.\ also \cite{Veroy03}.
Furthermore a component based RB method is studied for eigenvalue problems in \cite{VaHuKnNgPa14}. 

Very recently an RB method for the approximation of single eigenvalues in the context of parameterized elliptic eigenvalue problems has been investigated in \cite{FuMaPaVe15}. The authors derive a bound for the error in the first eigenvalue which is assumed to be single.

The aim of this paper is
to develop a model reduction framework for elliptic \muEVP s.
The application scenario we target
is the vibro-acoustics of cross-laminated timber structures.
Here,
a parameter-dependent eigenvalue problem in linear elasticity,
where the
input parameters are the material properties of different structural components,
is to be solved many times during a design/optimization phase.
Since the main part of a vibro-acoustical analysis is the modal analysis, which not only takes the first eigenvalue into account, but all eigenvalues under a certain frequency depending on the problem under investigation, 
the outputs of interest are the $\anzEW$ smallest eigenvalues with corresponding eigenfunctions.
A characteristic feature of the considered \muEVP s
is the appearance of multiple eigenvalues.
In particular, the multiplicities depend on the parameters.

A particular challenge of the considered \muEVP s is the rather large number of outputs of interest $\anzEW$, which in our exemplary case ranges from two to twenty. 
We are interested in approximating these smallest eigenvalues ``simultaneously'' in the sense that
a single reduced space is constructed for the variational approximation of the eigenvalue problem
and that
the individual a~posteriori error estimators for the eigenvalues
use the online components provided offline. This allows us to generate an efficient and accurate simultaneous reduced basis approximation.
The large number of outputs of interest $\anzEW$ justifies an increased computational effort
by an increased dimension $N$
as compared to the standard single output case.
In particular,
any decent, i.\,e., sufficiently accurate,
approximation needs $N\gg\anzEW$.
This approach constitutes a significant difference to the one taken in \cite{FuMaPaVe15} as our goal is a reduced basis approximation not only of one eigenvalue, but of a series of eigenvalues, including eigenvalues with multiplicity greater than one. The parameter-dependence of the multiplicity of the eigenvalues constitutes a major challenge in this context and is included into both our analysis and our algorithms.

Furthermore our experiments show that, in a greedy algorithm,
it is usually not optimal to include the first $\anzEW$ eigenfunctions for a particular parameter, neither is it advisable to choose the same number of eigenfunctions for different eigenvalues.
This may be attributed to the fact that the smoothness of the input-output relation can vary strongly
with the different outputs of interest, i.\,e., the eigenvalues.
We rather suggest to choose maximizing parameters for $\anzEW$ different error estimators, as described in Sect.~\ref{ssec:greedy}.
The reduced approximation should be of comparable quality
for a broad range of frequencies,
although in structural acoustics
the accuracy requirements might decrease with increasing frequency.
Note that,
for the application scenario at hand,
the number of desired eigenpairs
is typically in the order of ten
for simple components
and even larger for geometrically more complex structures.

The main contributions of this paper are
the analysis of an asymptotically reliable error estimator
including the case of multiple eigenvalues
and a series of
algorithmic advancements.
Our numerical results demonstrate that
tailored greedy strategies
yield very efficient reduced basis spaces
for the simultaneous approximation of many eigenvalues for the considered problem class.

The rest of the paper is structured as follows:
In Sect.~2,
we describe the problem setting and 
introduce the reduced basis method for \muEVP s.
Sect.~3 is devoted to the a~posteriori error analysis w.\,r.\,t.\ parameter variations.
We also discuss how to evaluate the derived error estimators computationally.
In Sect.~4,
several greedy algorithms are presented.
We demonstrate the effectivity of our algorithms
by numerical examples with the application to linear elasticity
in Sect.~5.

\section{Problem setting}
\label{sec:problem}

\subsection{Parameterized eigenvalue problems in computational mechanics}

Let the computational domain $\Omega \subset \RR^{d}$, with $d=2,3$, be bounded and polygonal.
As an elliptic eigenvalue model problem, we consider the linear elasticity case. But all our results also hold true for more general elliptic systems.
Then,
the eigenvalue problem in linear elasticity is given by
\begin{eqnarray}\label{eq:EVP1}
 -\text{div }\sigma = \lambda \rho u\quad\text{in }\Omega
\end{eqnarray}
with boundary conditions  prescribed as Dirichlet conditions on a closed 
non-trivial subset $\Gamma_D$ of $\partial\Omega$
and homogeneous Neumann conditions on $\partial\Omega\setminus\Gamma_D$. In addition,
the linearized stress and strain tensors are defined as
\begin{equation*}
\sigma(u)=\mathbb{C}(\mu) \epsilon(u)\qquad \text{ and } \qquad \epsilon(u) = \frac{1}{2} (\nabla u + \nabla u^ T),
\end{equation*}
respectively.
We set the density $\rho$ to $1$ for simplicity.
Furthermore,
the set of admissible parameters
is denoted by $\mathcal P\subset\RR^P$
and $\mu\in\mathcal P$ stands for a vector of parameters.
Then,
$\mathbb{C}(\mu)$ denotes the parameter-dependent Hooke's tensor,
which we assume to be uniformly positive definite.
To this end,
let $\Omega$
be decomposed into non-overlapping subdomains such that $\overline{\Omega}= \bigcup_{s}\overline\Omega_{s}$.
We assume that the 
material parameters
are piecewise constant w.\,r.\,t.\ this decomposition.
In the isotropic case,
the parameters may be chosen as
Young's modulus $E$ and Poisson's ratio $\nu$
such that
$P$ equals two times the number of structural components (i.\,e., subdomains).
More precisely,
we set $\mu_{2{s}-1} = E|_{\Omega_{s}}$ and $\mu_{2{s}} = \nu|_{\Omega_{s}}$ in this case.
The anisotropic case is treated analogously.

Let the bilinear forms $a(\cdot,\cdot;\mu): (H^1(\Omega))^d\times (H^1(\Omega))^d\to\RR$ and $m(\cdot,\cdot): (L^2(\Omega))^d\times (L^2(\Omega))^d\to\RR$ be given by
\begin{equation*}
 (u,v)\mapsto a(u,v;\mu):= \int_{\Omega} \mathbb{C}(\mu)\epsilon (u):\epsilon (v) \,\mathrm{d}x
\end{equation*}
and
\begin{equation*}
(u,v)\mapsto m(u,v) := (u,v)_{L^2(\Omega)} :=\int_{\Omega} u\cdot v\,\mathrm{d}x.
\end{equation*}
Note that
$a(\cdot,\cdot;\mu)$ depends on the parameter vector $\mu$
whereas $m(\cdot,\cdot)$ and $\Omega$ do not.

\begin{rmrk}
The equations of linear elasticity are used as a model problem as we are interested in the applications of vibro-acoustics. However this does not pose any restriction to the theoretical results shown in the following. Thus we could replace $a(\cdot,\cdot,\mu)$ by any $H^1$ elliptic bilinear form.
\end{rmrk}

Let $V\subset\left\{v\in (H^1(\Omega))^d\;|\;v|_{\Gamma_D}=0\right\}$ be a fixed conforming finite element space of dimension $\mathcal N$.
Then,
the discrete variational formulation of (\ref{eq:EVP1})
reads as:
Find the eigenvalues $\lambda(\mu) \in \RR$ and the eigenfunctions $u(\mu) \in V$ such that
\begin{equation}\label{eq:EVP}
  a(u(\mu),v;\mu) = \lambda(\mu) m(u(\mu),v) \quad \forall\;v\in V
\end{equation}
for given $\mu\in\mathcal P$.
We assume that the eigenvalues are positive and numbered as
\begin{equation*}
   0 < \lambda_1(\mu)\leq\ldots\leq\lambda_{\mathcal N}(\mu).
\end{equation*}
The corresponding eigenfunctions are denoted by
$u_i(\mu)\in V$ for $i=1,\ldots,\mathcal N$
with the normalization
\begin{equation*}
  m(u_i(\mu),u_j(\mu)) = \delta_{ij}\text{ for }1\leq i,j\leq\mathcal N.
\end{equation*}
In the present context,
the error of the finite element solution is assumed to be very small.
This is achieved by a fine mesh size leading to a large dimension $\mathcal N$.
The discretization error analysis can be found,
e.\,g., in \cite{BaOs91,BaGuOs89,BaOs89}.

Let $L\geq 1$ be the number of distinct eigenvalues of (\ref{eq:EVP}).
For multiple eigenvalues,
we use the standard notation from \cite{BaOs91}
and denote 
the lowest index of the $i$-th distinct eigenvalue
by $k_i$ and its multiplicity by $q_i$,
$i=1,\ldots,L$.
We write
$\Kcal_i:= \{k_i,\ldots,k_i+q_i-1\}$.
(Here and in the following, the dependency of the index notations on $\mu$ is suppressed as it is always clear from the context.)
The corresponding eigenspaces are denoted by
\begin{equation*}
  U_i(\mu) := \mathrm{span}\left\{u_{k_i}(\mu),\ldots,u_{k_i+q_i-1}(\mu)\right\}.
\end{equation*}

Now, the goal is to find a computationally inexpensive but accurate surrogate model that can be used in the many-query or real-time context.

\subsection{Model reduction}

We consider a variational approximation of the \muEVP\ in an $N$-dimensional reduced space
\begin{equation}\label{eq:V_N}
  V_N := \text{span}\left\{ \zeta_n\;|\; n=1,\ldots,N \right\} \subset V,
\end{equation}
$N\ll\mathcal N$.
As a matter of fact,
the choice of $V_N$ highly depends on the algorithmic methodology.
Several (snapshot-based) possibilities are investigated
in Sect.~\ref{sec:algo}.

Now, the ``reduced eigenvalue problem'' reads as
\begin{equation}\label{eq:redEVP}
  (\uRBnoindex(\mu),\lambdaRBnoindex(\mu))\in V_N\times\RR,\quad a(\uRBnoindex(\mu),v;\mu) = \lambdaRBnoindex(\mu) m(\uRBnoindex(\mu),v) \quad \forall\;v\in V_N
\end{equation}
for given $\mu\in\mathcal P$.
Let us emphasize that all eigenpairs of interest are approximated in the same space $V_N$.
As before,
we assume a numbering
$\lambdaRB{i}(\mu)$, $i=1,\ldots,N$
of the ``reduced eigenvalues''.
The minimum-maximum principles guarantee that
$\lambda_i(\mu) \leq \lambdaRB{i}(\mu)$ for $i=1,\ldots,N$;
see \cite[Sect.~8]{BaOs91}.
Note that the multiplicity of the finite element eigenvalues is not necessarily reflected in the reduced basis eigenvalues.
The corresponding eigenfunctions are denoted by
$\uRB{i}(\mu)\in V_N$ for $i=1,\ldots,N$,
again with the normalization
\begin{equation*}
  m(\uRB{i}(\mu),\uRB{j}(\mu)) = \delta_{ij}\text{ for }1\leq i,j\leq N.
\end{equation*}
In practice,
as mentioned before,
one is only interested in the first $\anzEW$ eigenvalues
for any chosen parameter.
We expect that the dimension $N$ required to achieve a certain accuracy will depend
not only on the smoothness of the parameter-dependency of the \muPDE\ but also on the number of outputs $\anzEW$.

In the present context,
$a(\cdot,\cdot;\mu)$ is affine w.\,r.\,t.\ the parameter $\mu$,
i.\,e.,
\begin{equation}\label{eq:affine_decomp}
  a(u,v;\mu) = \sum_{q=1}^Q \Theta_q(\mu) a_q(u,v)
\end{equation}
for suitable 
parameter-independent bilinear forms $a_q: (H^1(\Omega))^{\color{blue} d}\times (H^1(\Omega))^{\color{blue} d}\to\RR$
and coefficients $\Theta_q:\mathcal P\to\RR$,
which are readily derived from the constitutive equations.
For instance, we have two terms per subdomain in the isotropic case.
This leads to a fast online evaluation
as the cost of the assembly of the parameter-dependent reduced systems (i.\,e., matrices in $\RR^{N\times N}$ associated with (\ref{eq:redEVP}))
is independent of $\mathcal N$.
Note that the expansion (\ref{eq:affine_decomp}) will also be exploited for
an online-offline decomposition of the error estimators.

\subsection{Model reduction by proper orthogonal decomposition}\label{ssec:POD-RB}

Before turning to the development of greedy methods and a~posteriori error estimators for \muPDE s,
we illustrate the potential of model reduction techniques in the context of parameter dependent eigenvalue problems with multiple output values.

A common technique for model reduction is the proper orthogonal decomposition (POD) \cite{KaVo07,Qua14,QaMaNe15},
which yields the best possible reduced space
(in the sense that,
for a given series of snapshots,
the projection error w.\,r.\,t.\ the $L^2$-norm is minimized).
To this end,
let $S\subset V$ be a set of snapshots
generated by solving the \muEVP\
(each time for $\anzEW$ eigenfunctions)
for all parameters in a sufficiently large training set 
$\Xi_\mathrm{train}^\mathrm{POD}$.
Then,
in the definition (\ref{eq:V_N}) of the reduced space $V_N$,
orthonormalized functions $\left\{\zeta_1,\ldots,\zeta_N\right\}\subset\text{span}\{S\}$
are selected such that
\begin{equation*}
  \sum_{v\in S}\| v-\Pi_N v \|_{L^2(\Omega)}^2
\end{equation*}
is minimal.
Here,
$\Pi_N$ is the $L^2$-orthogonal projection to $\text{span}\left\{\zeta_1,\ldots,\zeta_N\right\}$.
This essentially amounts to
assembling the $\#S\times\#S$-correlation matrix of the snapshots in $S$ w.\,r.\,t.\ $(\cdot,\cdot)_{L^2(\Omega)}$
and finding its $N$ largest eigenvalues and corresponding eigenvectors.
For a detailed description of the usage of POD methods in the present context,
see, e.\,g.,
\cite{Qua14,KaVo07,QaMaNe15}.

\begin{figure}[th]
\begin{center}
\includegraphics[width=.33\textwidth]{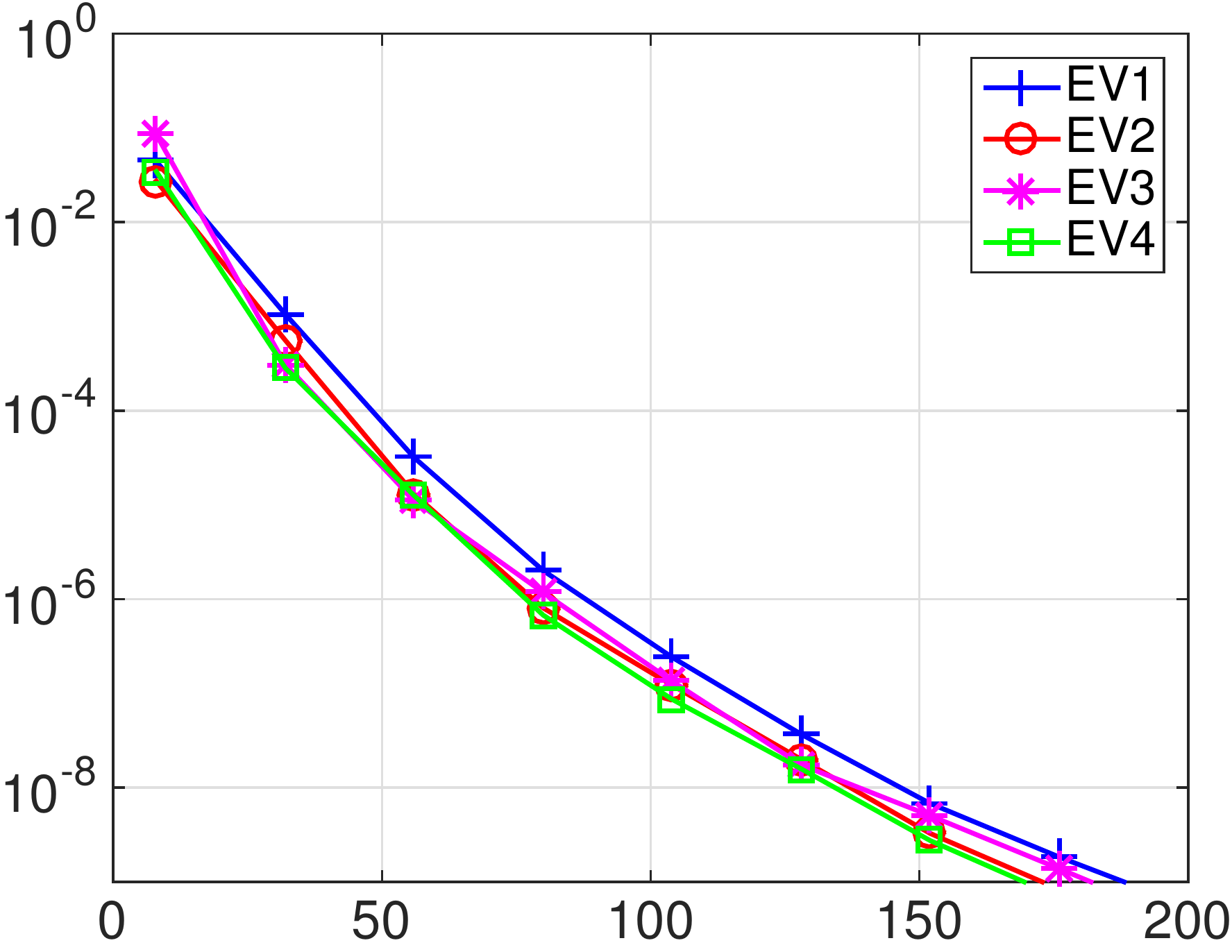}
\includegraphics[width=.33\textwidth]{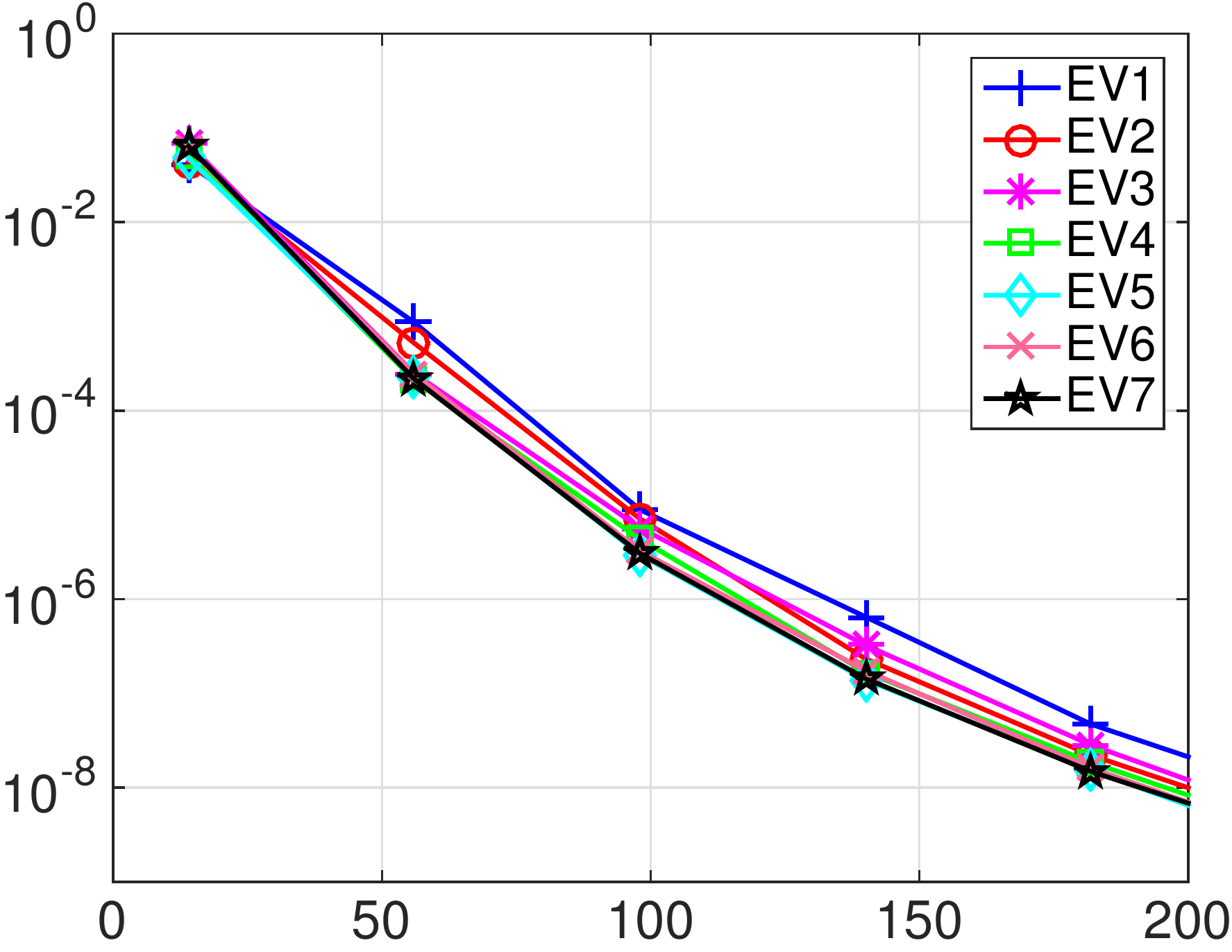}
\end{center}
\caption{Convergence of POD methods ($0<N\leq 200$) for the described \muEVP\ with different numbers of outputs of interest:
  Average relative errors in the eigenvalues $\lambda_1,\ldots,\lambda_K$
  for $\anzEW=4$ (left) and $\anzEW=7$ (right)}
\label{fig:PODRB_K47}
\end{figure}

Fig.~\ref{fig:PODRB_K47} illustrates the convergence of the POD method
for $\anzEW=4$ and $\anzEW=7$.
(The details of the underlying numerical experiment are elaborated in Sect.~\ref{sec:numerics}.)
On the one hand,
the results show that it is in principle possible to construct one single reduced space that effectively captures the parameter-dependent behavior of the first $\anzEW$ eigenfunctions simultaneously.
On the other hand,
it is evident that the RB dimension $N$ required for a certain accuracy increases with $\anzEW$.
More precisely,
the asymptotic decay of the error is approximately $C_4\,\text{e}^{-0.0513\cdot N}$ for $K=4$
and $C_7\,\text{e}^{-0.0477\cdot N}$ for $K=7$ for some constants $C_4$ and $C_7$.
From the cost point of view the POD is quite expensive, and thus we focus on computationally efficient greedy strategies in combination with a posteriori error bounds.

\section{A~posteriori error estimation}
\label{sec:estimator}
In this section, we will establish  a~posteriori error estimators for our output quantities, in this case the eigenvalues. It is important to determine such estimators in order to find out which basis functions should be selected by the greedy method.
In particular, their computational evaluation must only depend on the basis size $N$ but not on the dimension of the finite element space $\mathcal N$. 

To this end,
we first
derive error bounds that still depend on the finite element eigenvalues, and in particular on their multiplicities.
Then, Sect.~\ref{ssec:approxerrorbound} is devoted to a computable approximation
yielding the desired error estimators.

In Sect.~\ref{ssec:estimatordecomp},
we recall a standard online-offline decomposition.

\subsection{Error bounds}\label{ssec:errorbound}

Let the parameter-dependent energy norm be defined as
$\aNorm{\cdot}:=a(\cdot,\cdot;\mu)^\frac12$.

In addition to a parameter dependent norm we are using a parameter indepenent norm defined as
$\aNormRef{\cdot}:=\aRef(\cdot,\cdot)^\frac12:=a(\cdot,\cdot;\muRef)^\frac12$.
For a linear functional $r:V\to\mathbb R$,
we define the corresponding dual norms by
\begin{equation*}
  \aNormDual{r} := \sup_{0\not=v\in V}\frac{r(v)}{\aNorm{v}}\quad\text{and}\quad  \aNormRefDual{r} := \sup_{0\not=v\in V}\frac{r(v)}{\aNormRef{v}},
\end{equation*}
respectively.

The analysis and the practical implementation employ different error representations,
namely the so-called reconstructed errors
w.\,r.\,t.\ the bilinear forms $a(\cdot,\cdot;\mu)$ and $\aRef(\cdot,\cdot)$.
Using the residual
\begin{equation*}
  v\mapsto r_i(v;\mu) := a(\uRB{i}(\mu),v;\mu)-\lambdaRB{i}(\mu)\, m(\uRB{i}(\mu),v)
\end{equation*}
for $i=1,\ldots,N$,
we define $e_i(\mu)\in V$ and $\eRef_i(\mu)\in V$ by
\begin{equation}\label{def:e}
  a(e_i(\mu),v;\mu) = r_i(v;\mu)\quad\forall\;v\in V
\end{equation}
and
\begin{equation}\label{def:eRef}
  \aRef(\eRef_i(\mu),v) = r_i(v;\mu)\quad\forall\;v\in V,
\end{equation}
respectively.
In particular,
$\aNormDual{r_i(\cdot;\mu)} = \aNorm{e_i(\mu)}$
and
$\aNormRefDual{r_i(\cdot;\mu)} = \aNormRef{\eRef_i(\mu)}$.

For any $\mu\in\mathcal P$, assume that $g(\mu)>0$ is a generalized coercivity constant
such that $g(\mu)\hat{a}(v,v)\le a(v,v;\mu)$ for all $v\in V$.
(Technically speaking,
$g(\mu)$ is the parameter-dependent coercivity constant of $a(\cdot,\cdot;\mu)$ w.\,r.\,t.\ $\aNormRef{\cdot}$.)
This implies
\begin{equation}\label{ineq:g1}
  \aNormDual{r} \leq g(\mu)^{-\frac12}\aNormRefDual{r}
\end{equation}
for any $r\in V'$
and
\begin{equation}\label{ineq:g2}
  \aNormRef{v} \leq g(\mu)^{-\frac12}\aNorm{v}
\end{equation}
for any $v\in V$.

We are now ready to prove the error bounds.
The following theorem,
combined with the computational/algorithmic aspects
in Sect.~\ref{ssec:approxerrorbound}
and Sect.~\ref{sec:algo},
generalize the results of \cite{Pau07,MaMaOlPaRo00,Pau08,Pau07a} for the case of multiple eigenvalues.

\begin{thrm}\label{thm:errorbound}
  Let $1\leq i\leq L$ such that
  $k_i+q_i-1\leq N$. 
  For $j=1,\ldots,q_i$,
  set
  \begin{equation}\label{def:dtilde}
    \dtilde_{k_i+j-1}(\mu):=\min_{\substack{ \mathcal{N}\ge l>k_i+q_i-1}}\left|\frac{\lambda_l(\mu)-\lambdaRB{k_i+j-1}(\mu)}{\lambda_l(\mu)}\right|.
  \end{equation}
  Then,
  \begin{equation}\label{eq:mainthm1}
     0 \leq \lambdaRB{k_i+j-1}(\mu) - \lambda_{k_i}(\mu) \le \frac{\aNormDual{r_{k_i+j-1}(\cdot;\mu)}^2}{\dtilde_{k_i+j-1}(\mu)}(1+  \frac{\aNormDual{r_{k_i+j-1}(\cdot;\mu)}}{\dtilde_{k_i+j-1}(\mu)^2 \sqrt{\lambda_{k_i}}}).
  \end{equation}
\end{thrm}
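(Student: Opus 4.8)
The plan is to relate the reduced eigenvalue error to the residual dual norm via the spectral expansion of the reconstructed error, following the classical eigenvalue a~posteriori framework of Babu\v{s}ka--Osborn but tracking multiplicities carefully. First I would expand the reconstructed error $e_{k_i+j-1}(\mu)$ (defined by (\ref{def:e})) in the finite element eigenbasis $\{u_l(\mu)\}_{l=1}^{\mathcal N}$, which is $a(\cdot,\cdot;\mu)$-orthogonal. Writing $\uRB{k_i+j-1}(\mu)=\sum_l \alpha_l u_l(\mu)$, the residual tested against $u_l$ yields $r_{k_i+j-1}(u_l;\mu)=(\lambda_l-\lambdaRB{k_i+j-1})\,\alpha_l\,\lambda_l$ after using (\ref{eq:EVP}) and the $m$-orthonormality, so that the coefficients of $e_{k_i+j-1}$ in the energy norm become explicit in terms of $\alpha_l$ and the eigenvalue gaps. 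This gives the Pythagorean-type identity
\begin{equation*}
  \aNormDual{r_{k_i+j-1}(\cdot;\mu)}^2 = \aNorm{e_{k_i+j-1}(\mu)}^2 = \sum_{l=1}^{\mathcal N}\frac{(\lambda_l-\lambdaRB{k_i+j-1})^2}{\lambda_l}\,\alpha_l^2,
\end{equation*}
which is the central relation everything else is built on.

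Next I would establish the lower bound $0\le\lambdaRB{k_i+j-1}-\lambda_{k_i}$, which is immediate: since $\lambdaRB{k_i+j-1}\ge\lambdaRB{k_i}\ge\lambda_{k_i}$ by the min-max inequality $\lambda_i\le\lambdaRB{i}$ recalled after (\ref{eq:redEVP}) together with monotonicity of the ordering. For the upper bound, the strategy is to split the index set $\{1,\dots,\mathcal N\}$ into the ``low'' part $\{l\le k_i+q_i-1\}$ and the ``high'' part $\{l>k_i+q_i-1\}$. On the high part, each gap factor $(\lambda_l-\lambdaRB{k_i+j-1})^2/\lambda_l^2$ is bounded below by $\dtilde_{k_i+j-1}(\mu)^2$ by the very definition (\ref{def:dtilde}) of $\dtilde$, so the residual identity controls $\sum_{l>k_i+q_i-1}\lambda_l\alpha_l^2$ from above by $\aNormDual{r_{k_i+j-1}}^2/\dtilde_{k_i+j-1}^2$. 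The Rayleigh-quotient characterization of $\lambdaRB{k_i+j-1}-\lambda_{k_i}$ then needs to be massaged so that the contribution of the high modes is what remains after the low modes are handled, using that $\uRB{k_i+j-1}$ is $m$-normalized, i.e.\ $\sum_l\alpha_l^2=1$.

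The main obstacle, and the step that genuinely requires the multiplicity bookkeeping, is controlling the \emph{low} part of the spectrum and extracting the factor $(1+\aNormDual{r}/(\dtilde^2\sqrt{\lambda_{k_i}}))$. The subtlety is that $\lambdaRB{k_i+j-1}$ should be compared to $\lambda_{k_i}$ (the bottom of the $i$-th eigencluster), not to $\lambda_{k_i+j-1}$, and for $l$ in the cluster $\Kcal_i$ the difference $\lambda_l-\lambda_{k_i}$ vanishes, so those modes contribute nothing harmful; this is precisely why grouping by the distinct-eigenvalue index $i$ and restricting to $k_i+q_i-1\le N$ matters. I would write $\lambdaRB{k_i+j-1}-\lambda_{k_i}=\sum_l(\lambda_l-\lambda_{k_i})\alpha_l^2$ via the Rayleigh quotient and $m$-normalization, observe that terms with $l<k_i$ are non-positive (hence droppable for an upper bound) while terms with $l\in\Kcal_i$ vanish, leaving only the high-mode sum $\sum_{l>k_i+q_i-1}(\lambda_l-\lambda_{k_i})\alpha_l^2$. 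Bounding $(\lambda_l-\lambda_{k_i})\le\lambda_l$ and reusing the residual identity yields the leading term $\aNormDual{r}^2/\dtilde$, and a finer estimate of the mass $\sum_{l>k_i+q_i-1}\alpha_l^2$ against $\aNormDual{r}/(\dtilde^2\sqrt{\lambda_{k_i}})$ produces the correction factor. Getting the precise powers of $\dtilde$ and the $\sqrt{\lambda_{k_i}}$ normalization to match (\ref{eq:mainthm1}) is the delicate part; I expect it to come from estimating $\alpha_l$ individually as $|\alpha_l|\le \aNormDual{r}/(\dtilde\,\sqrt{\lambda_l})$ from the residual identity and then summing, rather than from a single Cauchy--Schwarz step.
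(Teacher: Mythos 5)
Your preparatory steps coincide with the paper's own proof: the eigenbasis expansion $\uRB{k_i+j-1}(\mu)=\sum_{l}\alpha_l u_l(\mu)$, the residual identity $\aNormDual{r}^2=\sum_{l}\alpha_l^2(\lambda_l-\lambdaRB{k_i+j-1}(\mu))^2/\lambda_l$ (writing $r:=r_{k_i+j-1}(\cdot;\mu)$; incidentally $r(u_l)=\alpha_l(\lambda_l-\lambdaRB{k_i+j-1}(\mu))$ without the extra factor $\lambda_l$ you wrote, though your displayed identity is correct), the min--max argument for the lower bound, the consequence $\sum_{l>k_i+q_i-1}\alpha_l^2\lambda_l\le\aNormDual{r}^2/\dtilde^{\,2}$ with $\dtilde:=\dtilde_{k_i+j-1}(\mu)$, and the reduction $\Delta\lambda:=\lambdaRB{k_i+j-1}(\mu)-\lambda_{k_i}(\mu)=\sum_l(\lambda_l-\lambda_{k_i})\alpha_l^2\le\sum_{l>k_i+q_i-1}(\lambda_l-\lambda_{k_i})\alpha_l^2$ obtained by dropping the non-positive low-mode and vanishing cluster terms. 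The gap is the step you present as giving the leading term: bounding $\lambda_l-\lambda_{k_i}\le\lambda_l$ and ``reusing the residual identity'' yields
\begin{equation*}
  \Delta\lambda\;\le\;\sum_{l>k_i+q_i-1}\alpha_l^2\lambda_l(\mu)\;\le\;\frac{\aNormDual{r}^2}{\dtilde^{\,2}},
\end{equation*}
i.e.\ the denominator $\dtilde^{\,2}$, \emph{not} $\dtilde$. Since $\dtilde<1$ in the regime of interest, this is strictly weaker than (\ref{eq:mainthm1}); it is precisely the Pauli-type estimate that the theorem is designed to sharpen (see the remark following the proof), and the paper derives it only as an auxiliary ``first bound'' that is later bootstrapped. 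Your proposed refinement cannot repair this: the individual estimates $|\alpha_l|\le\aNormDual{r}/(\dtilde\sqrt{\lambda_l})$, summed over $l$, produce a factor $\sum_l\lambda_l(\mu)^{-1}$ that is not bounded independently of $\mathcal N$, and nothing in that route generates either the single power of $\dtilde$ or the correction factor $1+\aNormDual{r}/(\dtilde^{\,2}\sqrt{\lambda_{k_i}})$.

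The missing idea is exactly the single Cauchy--Schwarz step you decided against, applied with an asymmetric splitting and followed by Young's inequality and a bootstrap. The paper writes
\begin{equation*}
  \Delta\lambda=\sum_{l>k_i+q_i-1}\Bigl(\alpha_l\tfrac{\lambda_l-\lambda_{k_i}}{\lambda_l}\sqrt{\lambda_l}\Bigr)\Bigl(\alpha_l\sqrt{\lambda_l}\Bigr)
  \le\Bigl(\sum_{l>k_i+q_i-1}\alpha_l^2\tfrac{(\lambda_l-\lambda_{k_i})^2}{\lambda_l^2}\lambda_l\Bigr)^{\frac12}\Bigl(\sum_{l>k_i+q_i-1}\alpha_l^2\lambda_l\Bigr)^{\frac12},
\end{equation*}
bounds the second factor by $\aNormDual{r}/\dtilde$, and treats the first factor --- which matches the residual identity except that it carries $\lambda_{k_i}$ in place of $\lambdaRB{k_i+j-1}(\mu)$ --- via Young's inequality $(\lambda_l-\lambda_{k_i})^2\le(1+\epsilon)(\lambda_l-\lambdaRB{k_i+j-1}(\mu))^2+(1+\tfrac1\epsilon)\Delta\lambda^2$ together with $\sum_{l>k_i+q_i-1}\alpha_l^2/\lambda_l\le1/\lambda_{k_i}$. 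This is where the single power of $\dtilde$ comes from: only one of the two residual factors is paid at cost $\dtilde^{-1}$, the other at cost one. Finally, inserting the crude bound $\Delta\lambda\le\aNormDual{r}^2/\dtilde^{\,2}$ into the resulting correction term and choosing $\epsilon=\aNormDual{r}/(\dtilde^{\,2}\sqrt{\lambda_{k_i}(\mu)})$, for which $1+\epsilon+(1+\tfrac1\epsilon)\epsilon^2=(1+\epsilon)^2$ is a perfect square, gives exactly (\ref{eq:mainthm1}). Without this combination, your outline only reproves the weaker $\dtilde^{-2}$ bound.
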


\begin{proof}
  Fix $\mu\in\mathcal P$, $1\leq i\leq L$ and $1\leq j\leq q_i$.
  Let $\uRB{k_i+j-1}(\mu) = \sum_{l=1}^\mathcal N \alpha_l u_l(\mu)$
  and $e_{k_i+j-1}(\mu) = \sum_{l=1}^\mathcal N \beta_l u_l(\mu)$.
  By (\ref{def:e}), we find
  \begin{equation*}
    \beta_l =\alpha_l \frac{\lambda_l(\mu)-\lambdaRB{k_i+j-1}(\mu)}{\lambda_l(\mu)}.
  \end{equation*}
  Therefore, we get
  \begin{eqnarray}
    \aNormDual{r_{k_i+j-1}(\cdot;\mu)}^2 &=& \sum_{l=1}^\mathcal N \alpha_l^2 \left(\frac{\lambda_l(\mu)-\lambdaRB{k_i+j-1}(\mu)}{\lambda_l(\mu)} \right)^2 \lambda_l(\mu) \nonumber\\
    &\geq& \sum_{\substack{ l>k_i+q_i-1}} \alpha_l^2 \left(\frac{\lambda_l(\mu)-\lambdaRB{k_i+j-1}(\mu)}{\lambda_l(\mu)} \right)^2 \lambda_l(\mu) \nonumber\\
    &\geq& \dtilde_{k_i+j-1}(\mu)^2\sum_{\substack{ l>k_i+q_i-1}} \alpha_l^2 \lambda_l(\mu).\label{est:anorm_r_1}
  \end{eqnarray}
  { 
  Using the fact that 
  $\sum_{l}\alpha_l^2=1$,  $\lambda_l(\mu) \le \lambda_{k_i}(\mu)$ for $l\le k_i+q_i-1$ , we find for the difference between approximated and detailed eigenvalue

  \begin{eqnarray}\label{eq:lambda_eq1}
    \Delta \lambda_{k_i}:=\lambdaRB{k_i+j-1}(\mu) - \lambda_{k_i}(\mu) &=& a(\uRB{k_i+j-1}(\mu),\uRB{k_i+j-1}(\mu);\mu) -  \lambda_{k_i}(\mu) \nonumber \\
    &=& \sum_{l=1}^\mathcal{N} \alpha_l^2 \lambda_l(\mu) - \lambda_{k_i}(\mu) \nonumber \\
    &=&  \sum_{l\le k_i+q_i-1} \alpha_l^2 (\lambda_l(\mu) - \lambda_{k_i}(\mu)) +  \sum_{l>k_i+q_i-1} \alpha_l^2 (\lambda_l(\mu) - \lambda_{k_i}(\mu)) \nonumber \\
    &\le & \sum_{l>k_i+q_i-1} \alpha_l^2 (\lambda_l(\mu) - \lambda_{k_i}(\mu)). 
     \end{eqnarray}
     From this we obtain two upper bounds for $\Delta \lambda_{k_i}$. The first one follows trivially from the fact that $\lambda_{k_i}(\mu)>0$ and (\ref{est:anorm_r_1})
     \begin{equation}
      \Delta \lambda_{k_i} \le  \sum_{l>k_i+q_i-1} \alpha_l^2 \lambda_l(\mu) \le \frac{\aNormDual{r_{k_i+j-1}(\cdot;\mu)}^2}{\dtilde_{k_i+j-1}(\mu)^2}.
     \end{equation}
The second bound is based on the Cauchy-Schwarz inequality and on Young's inequality. 
In terms of,
 \begin{eqnarray*}\label{eq:younge_part}
 (\lambda_l(\mu)-\lambda_{k_i}(\mu))^2
 &\le &(1+\epsilon)(\lambda_l(\mu)-\lambdaRB{k_i+j-1}(\mu))^2 + (1+\frac{1}{\epsilon})     (\Delta \lambda_{k_i})^2
\end{eqnarray*}  
for $\epsilon>0$, we get from (\ref{est:anorm_r_1}) and (\ref{eq:lambda_eq1})

    \begin{eqnarray*}\label{eq:lambda_eq2}
    \Delta \lambda_{k_i} &=& \sum_{l>k_i+q_i-1} \alpha_l \frac{\lambda_l(\mu) - \lambda_{k_i}(\mu)}{\lambda_l(\mu)}\sqrt{\lambda_l(\mu)}\alpha_l \sqrt{\lambda_l(\mu)}  \nonumber\\
    &\le &\left(\sum_{l>k_i+q_i-1} \alpha_l^2 (\frac{\lambda_l(\mu) - \lambda_{k_i}(\mu)}{\lambda_l(\mu)})^2\lambda_l(\mu)\right)^{\frac{1}{2}} \left(\sum_{l>k_i+q_i-1} \alpha_l^2 \lambda_l(\mu)\right)^{\frac{1}{2}}\\
     &\le & 
    \frac{1}{\dtilde_{k_i+j-1}(\mu)}\aNormDual{r_{k_i+j-1}(\cdot;\mu)} \sqrt{(1+\epsilon)\aNormDual{r_{k_i+j-1}(\cdot;\mu)}^2  +(1+\frac{1}{\epsilon})\Delta \lambda_{k_i}^2  \sum_{l>k_i+q_i-1} \alpha_l^2 \frac{1}{\lambda_l(\mu)}}\\
&\le &\frac{1}{\dtilde_{k_i+j-1}(\mu)}\aNormDual{r_{k_i+j-1}(\cdot;\mu)} \sqrt{(1+\epsilon)\aNormDual{r_{k_i+j-1}(\cdot;\mu)}^2  +(1+\frac{1}{\epsilon})\frac{\Delta \lambda_{k_i}^2  }{\lambda_{k_i}(\mu)}}\\
&\le& \frac{1}{\dtilde_{k_i+j-1}(\mu)}\aNormDual{r_{k_i+j-1}(\cdot;\mu)}^2 \sqrt{1+\epsilon + (1+\frac{1}{\epsilon})\frac{\aNormDual{r_{k_i+j-1}(\cdot;\mu)}^2}{\dtilde_{k_i+j-1}(\mu)^4 \lambda_{k_i}(\mu)}}.
  \end{eqnarray*}
}
Setting $\epsilon=\frac{\aNormDual{r_{k_i+j-1}(\cdot;\mu)}}{\dtilde_{k_i+j-1}(\mu)^2  \sqrt{\lambda_{k_i}(\mu)}}$ gives the upper bound in (\ref{eq:mainthm1}). The lower bound follows directly from 
\cite[Sect.~8]{BaOs91}.  
  
\end{proof}

{
Besides the generalization to multiple eigenvalues,
let us point out that our bounds are sharper than the ones, e.\,g., in \cite[Prop.~1]{Pau08},
as the lowest order term in (\ref{eq:mainthm1}) is of the form
$\frac{\aNormDual{r_{k_i+j-1}(\cdot;\mu)}^2}{\dtilde_{k_i+j-1}(\mu)}$
rather than
$\frac{\aNormDual{r_{k_i+j-1}(\cdot;\mu)}^2}{\dtilde_{k_i+j-1}(\mu)^2}$.
}
Note that the error bounds in Thm.~\ref{thm:errorbound} still depend on the finite element solution
via the eigenvalues $\lambda_l(\mu)$ in (\ref{def:dtilde}).
{

\begin{rmrk}
It is also possible to give an upper bound for the eigenvectors by replacing $\dtilde_{k_i+j-1}(\mu)$ by $\hat{d}_{k_i+j-1}(\mu)$ defined as
\begin{equation*}
 \hat{d}_{k_i+j-1}(\mu):=\min_{ \mathcal{N}\ge l>k_i+q_i-1\;\vee\;l<k_i}\left|\frac{\lambda_l(\mu)-\lambdaRB{k_i+j-1}(\mu)}{\lambda_l(\mu)}\right|.
\end{equation*}
Using now $\Pi_i:V\to U_i(\mu)$ as the orthogonal projection w.\,r.\,t.\ the $L^2$-inner product, we define
$\bar v := \Pi_i(\uRB{k_i+j-1}(\mu)) = \sum_{ \mathcal{N}\ge l>k_i+q_i-1\;\vee\; l<k_i} \alpha_l u_l(\mu)$ and give the upper bound as
 \begin{eqnarray*}
   \aNorm{\uRB{k_i+j-1}(\mu) - \bar v}^2 &=& \aNorm{\sum_{\mathcal{N}\ge l>k_i+q_i-1\;\vee\;l<k_i} \alpha_l u_l(\mu)}^2
   \\&=& \sum_{\mathcal{N}\ge l>k_i+q_i-1\;\vee\;l<k_i} \alpha_l^2 \lambda_l(\mu)\le  \frac{\aNormDual{r_{k_i+j-1}(\cdot;\mu)}^2}{\hat{d}_{k_i+j-1}(\mu)^2}.
 \end{eqnarray*}
\end{rmrk}
}

\subsection{{Error estimators}}\label{ssec:approxerrorbound}

\vskip1em

{
We now derive
approximate error bounds that are computable
in the sense that they do not depend on the finite element solution.
To achieve this,}
it remains to approximate $\dtilde_i$,
which may be interpreted as a measure for the relative distance between neighboring/adjacent eigenvalues,
particularly to decide which of the indices to exclude from the minimum.
We point out that the dimension of the (detailed) eigenspace is not accessible.
The application scenario we have in mind features multiple eigenvalues
with their multiplicities depending on the parameter.
It is therefore impossible to determine the structure of the spectrum
(i.\,e., the indices $k_i$ or the index sets $\Kcal_i$) a~priori.

Recall that the first $\anzEW$ eigenvalues are the output quantities of interest.
Assume that the reduced basis method converges in the following sense:
For $\mu\in\mathcal P$ and $1\leq i\leq\anzEW$,
\begin{equation*}
  \lambdaRB{i}(\mu)\to\lambda_i(\mu)\text{ for }N\to\mathcal N.
\end{equation*}
In particular,
$\lambdaRB{j}(\mu)\to\lambda_{k_i}(\mu)$
for $N\to\mathcal N$
for $j\in\Kcal_i$.

Given the eigenvalues
$\lambdaRB{i}(\mu)$, $i=1,\ldots,K$,
of (\ref{eq:redEVP}),
we replace $\lambda_l(\mu)$ in (\ref{def:dtilde}) by $\lambdaRB{l}(\mu)$
and approximate $\Kcal_i$ by
\begin{equation*}
  \KcalRB{i} := \left\{1\leq j\leq\anzEW+\mathfrak{r} ; \; \, \left|\frac{\lambdaRB{j}(\mu)-\lambdaRB{i}(\mu)}{\lambdaRB{j}(\mu)}\right| < \epslambda  \right\}
\end{equation*}
for a chosen ``tolerance'' $\epslambda>0$ and with $\mathfrak{r}$ as the difference between the index of the first eigenvalue after the multiplicity of the $\anzEW$-th eigenvalue  and the $\anzEW$-th eigenvalue itself.
In the case that we know a priori the maximal multiplicity of all relevant eigenvalues for all parameters, we set $\mathfrak{r}$ equal to this value. Otherwise we select it adaptively during the initialization phase of the greedy method. More precisely, we start with $\mathfrak{r} =1 $ and increase it by one as 
long as $K + \mathfrak{r} \in  \KcalRB{i} $.
Thus $\#\KcalRB{i}$ will be  our best guess
for the multiplicity of the eigenvalue to which $\lambdaRB{i}(\mu)$ converges.
Then for $1\le i \le K$,
\begin{equation}\label{eq:d_i}
  \d_i(\mu):=\min_{\substack{l\not\in\KcalRB{i} \\ \anzEW+\mathfrak{r} \ge l > i}}\left|\frac{\lambdaRB{l}(\mu)-\lambdaRB{i}(\mu)}{\lambdaRB{l}(\mu)}\right|
\end{equation}
is the relative distance of $\lambdaRB{i}(\mu)$ to the reduced eigenvalues that are further away than the chosen tolerance $\epslambda$. The adaptive selection of $\mathfrak{r}$ guarantees that even for $i=K$ and multiple eigenvalues
$d_i(\mu) $ is easily computable and does not severely underestimate $\tilde d_i(\mu)$.

Finally,  since we are looking for an asymptotic estimator  for the relative error in the eigenvalues which is cheaply computable in the online-phase, we
 neglect the higher order term in (\ref{eq:mainthm1}).
{In addition,} the parameter-dependent norm $\aNormDual{\cdot}$ {is replaced} by the parameter-independent norm $\aNormRefDual{\cdot}$ {by (\ref{ineq:g1}), which introduces an additional factor $g(\mu)^{-1}$}. 
{
To summarize we can state the following corollary:
\begin{crllr}
Let $i=1,\ldots,\anzEW$ and $\lambdaRB{i}(\mu)\to\lambda_i(\mu)\text{ for }N\to\mathcal N$. Furthermore let $\KcalRB{i}$ be defined as above and the distance between neighboring eigenvalues $d_i(\mu)$ be given as in (\ref{eq:d_i}). Then the error estimator given by
\begin{equation}\label{eq:eta_i}
  \mu\mapsto\eta_i(\mu) := \frac{\aNormRefDual{r_i(\cdot;\mu)}^2}{g(\mu)\cdot\d_i(\mu)\cdot\lambdaRB{i}(\mu)}.
\end{equation}
is asymptotically reliable in the sense that 
\begin{equation*}
0 \leq \frac{\lambdaRB{k_i+j-1}(\mu) - \lambda_{k_i}(\mu)}{\lambda_{k_i}} \le C \eta_i(\mu),
\end{equation*}
with $C$ tending to one as $N$ tends to $\mathcal N$.
\end{crllr}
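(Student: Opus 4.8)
The plan is to start from the bound (\ref{eq:mainthm1}) of Theorem~\ref{thm:errorbound}, which already supplies the lower bound and a (still non-computable) upper bound, and to show that each of the three stated simplifications -- neglecting the higher order term, replacing $\dtilde_{k_i+j-1}$ by $\d_i$, and passing from $\aNormDual{\cdot}$ to $\aNormRefDual{\cdot}$ -- distorts the bound only by a factor converging to one. Throughout I identify the ``flat'' index $i$ on the right-hand side of the corollary with $k_i+j-1$ on the left, so that $r_i=r_{k_i+j-1}(\cdot;\mu)$, $\lambdaRB{i}=\lambdaRB{k_i+j-1}(\mu)$, and the two distances refer to the same eigenvalue.

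First I would divide (\ref{eq:mainthm1}) by $\lambda_{k_i}(\mu)$ and apply (\ref{ineq:g1}) in the form $\aNormDual{r}^2\le g(\mu)^{-1}\aNormRefDual{r}^2$ to the leading factor. This is the reliability-preserving direction of the inequality, and it produces exactly the factor $g(\mu)^{-1}$ that is compensated by the $g(\mu)$ already placed in the denominator of $\eta_i$ in (\ref{eq:eta_i}). The outcome is
\begin{equation*}
  0\le\frac{\lambdaRB{k_i+j-1}(\mu)-\lambda_{k_i}(\mu)}{\lambda_{k_i}(\mu)}
  \le \eta_i(\mu)\cdot\underbrace{\frac{\d_i(\mu)\,\lambdaRB{i}(\mu)}{\dtilde_{k_i+j-1}(\mu)\,\lambda_{k_i}(\mu)}
  \left(1+\frac{\aNormDual{r_{k_i+j-1}(\cdot;\mu)}}{\dtilde_{k_i+j-1}(\mu)^2\sqrt{\lambda_{k_i}(\mu)}}\right)}_{=:C},
\end{equation*}
so it remains to prove $C\to1$ as $N\to\mathcal N$.

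The argument then splits into three limits. The ratio $\lambdaRB{i}(\mu)/\lambda_{k_i}(\mu)\to1$ is immediate from the convergence hypothesis $\lambdaRB{i}(\mu)\to\lambda_i(\mu)$. The higher order factor tends to one because the residual vanishes in the limit: as the reduced space exhausts $V$, the reduced eigenfunction converges to a finite element eigenfunction, whence $r_{k_i+j-1}(\cdot;\mu)\to0$ in $V'$ and $\aNormDual{r_{k_i+j-1}(\cdot;\mu)}\to0$, while $\dtilde_{k_i+j-1}(\mu)$ stays bounded away from zero (it converges to a genuine relative spectral gap) and $\lambda_{k_i}(\mu)$ is fixed. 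The remaining factor $\d_i(\mu)/\dtilde_{k_i+j-1}(\mu)\to1$ is where the real work lies.

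The main obstacle is this last convergence, because $\d_i$ is assembled from the reduced eigenvalues and the heuristic cluster $\KcalRB{i}$, whereas $\dtilde_{k_i+j-1}$ uses the finite element eigenvalues and the true eigenspace structure $\Kcal_i$. Two points must be verified. First, $\lambdaRB{l}(\mu)\to\lambda_l(\mu)$ for all indices $l$ entering either minimum, so the individual quotients converge. Second, and more delicately, $\KcalRB{i}$ must recover $\Kcal_i$ asymptotically: within a genuine cluster the relative differences collapse to zero and eventually drop below $\epslambda$ (so they are included), whereas across distinct eigenvalues the relative differences converge to strictly positive gaps and are eventually excluded -- \emph{provided} $\epslambda$ is chosen strictly below the smallest such gap and $\mathfrak{r}$ is large enough that the minimizing index stays within $l\le\anzEW+\mathfrak{r}$. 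Under these conditions the admissible index realizing $\d_i$ coincides in the limit with the minimizer of $\dtilde_{k_i+j-1}$ (the first eigenvalue above $\Kcal_i$), the two minima agree, and $C\to1$. I would state the separation condition on $\epslambda$ explicitly, since without it the identification of the parameter-dependent multiplicities emphasized throughout the paper need not stabilize, and the claimed asymptotic reliability could fail.
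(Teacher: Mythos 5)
Your proposal is correct and follows essentially the same route as the paper: the corollary is obtained from Theorem~\ref{thm:errorbound} by exactly the three simplifications you identify (dropping the higher-order term, passing from $\aNormDual{\cdot}$ to $\aNormRefDual{\cdot}$ via (\ref{ineq:g1}) at the cost of the factor $g(\mu)^{-1}$, and replacing $\dtilde_{k_i+j-1}$ by $d_i$), each treated as an asymptotically negligible perturbation under the convergence hypothesis. Your write-up is in fact more explicit than the paper's informal discussion, notably in isolating the correction factor $C$ as a product of three ratios and in stating the separation condition on $\epslambda$ and $\mathfrak{r}$ needed for $\KcalRB{i}$ to recover $\Kcal_i$, which the paper handles only heuristically through the adaptive choice of $\mathfrak{r}$ and the remark that $\epslambda$ must reflect the desired accuracy.
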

}

Note that the approximation in (\ref{eq:d_i}) is, in general,  less accurate for $i=\anzEW$.
This is because the space $V_N$ is built to approximate well the $\anzEW$ outputs, but for the $\anzEW$-th estimator we need the $(\anzEW+\mathfrak{r})$-th outputs with $\mathfrak{r}\ge1$, which are approximated only roughly.
The tolerance
$\epslambda$ has to be selected such that it reflects the desired accuracy  of the  RB approximation.

\subsection{Online-offline decomposition}\label{ssec:estimatordecomp}

All error estimator contributions may be decomposed as already outlined in \cite{MaMaOlPaRo00}.
Let
$(\zeta_n)_{1\leq n\leq N}$ be the orthonormal basis (w.\,r.\,t.\ $m(\cdot,\cdot)$) of $V_N$.
For $0\leq q,p\leq Q$
let $\ARef^{q,p}\in\mathbb R^{N\times N}$
with $\ARef^{q,p}_{n,m} := \aRef(\xi_n^q,\xi_m^p)$ for $1\leq n,m\leq N$
where
\begin{equation}\label{eq:xi_q_n}
  \aRef(\xi_n^q,v) = a_q(\zeta_n,v),\quad\forall\;v\in V,\;1\leq n\leq N,\;1\leq q\leq Q,
\end{equation}
and
\begin{equation}\label{eq:xi_0_n}
  \aRef(\xi_n^0,v) = m(\zeta_n,v),\quad\forall\;v\in V,\;1\leq n\leq N.
\end{equation}
In the following,
we identify the function $\uRB{i}(\mu)\in V_N$
and its vector representation w.\,r.\,t.\ the basis $(\zeta_n)_{1\leq n\leq N}$
such that
$\left(\uRB{i}(\mu)\right)_n$ denotes the $n$-th coefficient.
Then,
given a reduced eigenpair $\left(\uRB{i}(\mu),\lambdaRB{i}(\mu)\right)$, we have the error representation
\begin{equation*}
  \eRef_i(\mu) = \sum_{n=1}^N \sum_{q=1}^Q \Theta_q(\mu) \left(\uRB{i}(\mu)\right)_n \xi_n^q
  - \lambdaRB{i}(\mu) \sum_{n=1}^N  \left(\uRB{i}(\mu)\right)_n \xi_n^0
\end{equation*}
by (\ref{def:eRef}).
Consequently,
the main contribution of $\eta_i(\mu)$ decomposes into
\begin{equation*}
  \begin{split}
    \aNormRefDual{r_i(\cdot;\mu)}^2 = & \sum_{n=1}^N \sum_{m=1}^N \sum_{q=1}^Q \sum_{p=1}^Q
    \left(\uRB{i}(\mu)\right)_n \left(\uRB{i}(\mu)\right)_m \Theta_q(\mu) \Theta_p(\mu) \,\ARef^{q,p}_{n,m}\\
    & + \lambdaRB{i}^2(\mu) \sum_{n=1}^N \sum_{m=1}^N
    \left(\uRB{i}(\mu)\right)_n \left(\uRB{i}(\mu)\right)_m \,\ARef^{0,0}_{n,m}\\
    & -2\,\lambdaRB{i}(\mu) \sum_{n=1}^N \sum_{m=1}^N \sum_{q=1}^Q
    \left(\uRB{i}(\mu)\right)_n \left(\uRB{i}(\mu)\right)_m \Theta_q(\mu) \,\ARef^{q,0}_{n,m}.
  \end{split}
\end{equation*}

We recall that only a single reduced space is built for the approximation of all eigenvectors simultaneously. Thus the above decomposition uses the same offline ingredients for all $1\leq i\leq K$.
In particular,
the number $\anzEW$ of desired eigenpairs does not directly influence the complexity
(only via the reduced space dimension $N$).

\section{Algorithms / Basis construction}
\label{sec:algo}

In this section,
we present different greedy strategies
that employ the error estimators of Sect.~\ref{sec:estimator}
to build the reduced space in (\ref{eq:V_N}).
The advantage as compared to the POD method motivated in Sect.~\ref{ssec:POD-RB} is
that only relatively few finite element solutions of the \muEVP\ need to be computed.

Since we use a single space for the approximation of multiple outputs,
we have several natural possibilities
which are investigated in Sect.~\ref{ssec:greedy}.
Sect.~\ref{ssec:greedy_ext} is devoted to an extension that takes into account multiple eigenvalues.
In Sect.~\ref{ssec:smallPOD},
a remedy for the potential unreliability of the error estimators for small $N$ is discussed.

\subsection{Greedy selection of snapshots for single eigenvalues}\label{ssec:greedy}

Recall that the $\anzEW$ smallest eigenvalues are the quantities of interest,
where $\anzEW$ is typically 
$2-20$
for our application scenario.
In principle,
given a reduced space,
one could try to identify a suitable $\mu\in\mathcal P$
and then include the first $\anzEW$ eigenfunctions
for this parameter value.
(In each greedy step, this would require the detailed FE solution of (\ref{eq:EVP}) for one parameter only.)
However,
numerical studies clearly show that this naive choice is far from optimal
as the generated reduced spaces tend to be much too large.
This is because
the errors in the individual eigenvalues and eigenfunctions are only very weakly correlated,
if at all.
There are at least the following two much more natural options.

\begin{algorithm}[th]
  \normalsize
  \caption{Multi-choice greedy}\label{alg:greedy_alle}
  \begin{algorithmic}[1]
    \For{$i=1,\ldots,\anzEW$}
    \State $\zeta_i \gets u_i(\muRef)$
    \EndFor
    \State $N \gets \anzEW$
    \While{$N < N_\mathrm{max}$}
    \For{$i=1,\ldots,\anzEW$}
    \State $\mu_{\mathrm{max},i} \gets \argmax_{\mu\in\Xi_\mathrm{train}} \eta_i(\mu)$
    \If{$ \eta_i(\mu_{\mathrm{max},i}) > \TOL$ }
    \State $N \gets N+1$
    \State $\zeta_N \gets u_i(\mu_{\mathrm{max},i})$\;\,{(orthonormalized)}
    \EndIf
    \EndFor
    \If{$\max_{\mu\in\Xi_\mathrm{train},1\leq i\leq\anzEW} \eta_i(\mu) < \TOL$}
    \State \textbf{break}
    \EndIf
    \EndWhile
  \end{algorithmic}
\end{algorithm}

\begin{algorithm}[th]
  \normalsize
  \caption{Single-choice greedy}\label{alg:greedy_einzeln}
  \begin{algorithmic}[1]
    \For{$i=1,\ldots,\anzEW$}
    \State $\zeta_i \gets u_i(\muRef)$
    \EndFor
    \State $N \gets \anzEW$
    \While{$N < N_\mathrm{max}$}
    \State $(\mu_\mathrm{max},i_\mathrm{max}) \gets \argmax_{\mu\in\Xi_\mathrm{train},1\leq i\leq\anzEW} \eta_i(\mu)$
    \State $N \gets N+1$
    \State $\zeta_N \gets u_{i_\mathrm{max}}(\mu_\mathrm{max})$\;\,{(orthonormalized)}
    \If{$\max_{\mu\in\Xi_\mathrm{train},1\leq i\leq\anzEW} \eta_i(\mu) < \TOL$}
    \State \textbf{break}
    \EndIf
    \EndWhile
  \end{algorithmic}
\end{algorithm}

Let a sufficiently rich training set $\Xi_\mathrm{train}\subset\mathcal P$ be given.
Then,
in Alg.~\ref{alg:greedy_alle},
the individual arg\,max for each $1\leq i\leq\anzEW$ is chosen separately.
In contrast,
Alg.~\ref{alg:greedy_einzeln} chooses 
only one single
arg\,max.
Note that both
Alg.~\ref{alg:greedy_alle} (line~7)
and
Alg.~\ref{alg:greedy_einzeln} (line~6)
require the evaluation of all error estimators at all parameters in $\Xi_\mathrm{train}$ to determine the choice of $\mu$. This does not lead to large computations since the calculations are only performed with the reduced space of size $N$, such that we obtain $\anzEW$ reduced eigenpairs for any $\mu\in\Xi_\mathrm{train}$,
see also Sect.~\ref{ssec:estimatordecomp}.
However,
Alg.~\ref{alg:greedy_alle} (line~10)
and
Alg.~\ref{alg:greedy_einzeln} (line~8)
require also finite element solutions which then determine the reduced basis space.

The multi-choice variant rests on the intuition
that the individual eigenfunctions can/should be approximated separately.
In contrast,
the single-choice variant takes into account
that the approximation power of eigenfunctions to large eigenvalues can be exploited also for eigenfunctions to smaller eigenvalues.

During the greedy procedure,
we orthonormalize the selected basis functions.
Not only does this yield small condition numbers of the reduced systems;
it is also beneficial for the special treatment of multiple eigenvalues
described in the next section.

In Alg.~\ref{alg:greedy_alle} (line~10)
and Alg.~\ref{alg:greedy_einzeln} (line~8),
an orthonormalization is performed.
For this purpose,
let $\Pi_N:V\to V_N$ be the $L^2$-orthogonal projection to the current reduced space.
For a snapshot candidate $\zeta\in V$
(i.\,e., one of the eigenfunctions chosen as described above),
we compute $\tilde\zeta := \zeta - \Pi_N\zeta$.
Then, if $\|\tilde\zeta\|$ is sufficiently large
($\geq\epsproj$),
the ``new contribution''
$\frac{\tilde\zeta}{\|\tilde\zeta\|}$
is included in the reduced basis;
see also Sect.~\ref{ssec:greedy_ext}.

\subsection{Extended selection for multiple eigenvalues}\label{ssec:greedy_ext}

In case of multiple eigenvalues,
the greedy method needs to be modified as follows.
Assume an index $1\leq \tilde\imath\leq\anzEW$
and a parameter $\tilde\mu$
have been selected
by means of the eigenvalue-based estimators $(\eta_i)_{i=1,\ldots,\anzEW}$,
 in Alg.~\ref{alg:greedy_einzeln} (line~6),
or several parameters $\tilde\mu$ have been selected in Alg.~\ref{alg:greedy_alle} (line~7),
such that the span of $u_{\tilde\imath}(\tilde\mu)$ is to be included in the reduced space.

However,
a large value of $\eta_{\tilde\imath}(\tilde\mu)$
merely indicates that the corresponding (fine) eigenspace $U_{\tilde\imath}(\tilde\mu)$ contains functions
that are badly approximated by the current reduced space.
Nevertheless the eigenspace might also contain other functions that are already well approximated.
Consequently,
if the detailed eigenvalue associated with a chosen snapshot has multiplicity greater than one,
we aim to add all the eigenfunctions for the multiple eigenvalue, except the ones which are already approximated well enough. A motivation for exploring the whole eigenspace for  multiple eigenvalues is to guarantee that we take the correct eigenvalue/eigenfunction, since we cannot ensure that the indexed eigenvalue/eigenfunction in the reduced space is the same as in the detailed calculation. This is due to the fact that there is no prescribed ordering for the eigenfunctions corresponding to a multiple eigenvalue.

As for the definition of $d_i(\mu)$
one has to compute 
a sufficient number $\anzEW' > \anzEW$ of eigenfunctions of the finite element
problem \muEVP\ (\ref{eq:EVP}) such that 
$\lambda_{\anzEW'}(\tilde\mu) / \lambda_\anzEW(\tilde\mu) > 1+\epslambda$.
Then,
lines~9--10 in Alg.~\ref{alg:greedy_alle} are replaced by:\\[-2mm]
\hrule
\begin{algorithmic}[0]
  \ForAll{$j\geq 1$ \textbf{with} $|\lambda_{j}(\mu_{\mathrm{max},i})-\lambda_{i}(\mu_{\mathrm{max},i})| / \lambda_{i}(\mu_{\mathrm{max},i}) \leq \epslambda$}
  \If{$\|u_{j}(\mu_{\mathrm{max},i})-\Pi_Nu_{j}(\mu_{\mathrm{max},i})\|_{L^2(\Omega)} \geq \epsproj$}
  \State $N \gets N+1$
  \State $\zeta_N \gets u_{j}(\mu_{\mathrm{max},i})$\;\,{(orthonormalized)}
  \EndIf
  \EndFor
\end{algorithmic}
\hrule
\vskip.5em
\noindent
Analogously,
lines~7--8 in Alg.~\ref{alg:greedy_einzeln} now read as:\\[-2mm]
\hrule
\begin{algorithmic}[0]
  \ForAll{$j\geq 1$ \textbf{with} $|\lambda_{j}(\mu_\mathrm{max})-\lambda_{i_\mathrm{max}}(\mu_\mathrm{max})| / \lambda_{i_\mathrm{max}}(\mu_\mathrm{max}) \leq \epslambda$}
  \If{$\|u_{j}(\mu_\mathrm{max})-\Pi_Nu_{j}(\mu_\mathrm{max})\|_{L^2(\Omega)} \geq \epsproj$}
  \State $N \gets N+1$
  \State $\zeta_N \gets u_{j}(\mu_\mathrm{max})$\;\,{(orthonormalized)}
  \EndIf
  \EndFor
\end{algorithmic}
\hrule
\vskip.5em
\noindent
Here,
$\Pi_N:V\to V_N$ denotes the $L^2$-orthogonal projection.
The parameter $\epsproj$ is a small tolerance that prevents the selection of functions that are already approximated sufficiently well.

\subsection{Initialization of the greedy method}\label{ssec:smallPOD}

{
In our calculations, we need an error estimator for the $\anzEW$-th eigenvalue. For the computation of this estimator, we need a rough approximation of the $(\anzEW + \mathfrak{r})$-th eigenvalue.
In order to ensure that our reduced space has the ability to roughly approximate this $(\anzEW + \mathfrak{r})$-th eigenvalue, we use an initial approximation space in which we include the corresponding components.
We suggest to include components using the proper orthogonal decomposition method
{described in Sect.~\ref{ssec:POD-RB} (with $N=N_\mathrm{init}$)}
applied to a small number of snapshots.
{Here, the snapshots $S\subset V$ are associated with a} training set $\Xi_\mathrm{train}^\mathrm{POD}$
typically of size $2^P$,
taking into account the extension described in Sect.~\ref{ssec:greedy_ext}.
This initial approximation space {of dimension $N_\mathrm{init}$}, which is constructed
as an initialization step for the greedy algorithm, should be sufficiently large as the reliability of the error estimators analyzed in Sect.~\ref{sec:estimator} can depend on the dimension of the reduced space. 
{
To make sure that we are able to calculate and to approximate the $(\anzEW + \mathfrak{r})$-th eigenvalues, we chose our 
$N_\mathrm{init}$ to be at least $(\anzEW + \mathfrak{r})$ times a factor $\ge 1.5$. 
}
}

\section{Numerical results}
\label{sec:numerics}

In this section, the performance of the proposed algorithms is illustrated by numerical examples, { in two and in three dimensions. For the two-dimensional calculations we use plane strain elasticity while for the three-dimensional simulations we use linear elasticity.}
The implementation is performed in MATLAB based on the RBmatlab \cite{DrHaKaOh12} library.
We investigate the individual components and highlight their benefits in several steps.

\subsection{Preliminaries}

First, 
in Sect.~\ref{sec:extended_vs_non} to  Sect.~\ref{ssec:run_time},
we choose $\Omega$ as rectangle of size $3.0 \times 1.0$ with Dirichlet boundary on the left and on the right.
Let $\Omega$ be split into three subdomains of size $1.0 \times 1.0$. 
The material parameters $E$ and $\nu$ used for these subdomains are in the range of $10-100$ and $0.1-0.4$, respectively;
we have $P=6$ and $Q=6$.
We choose a uniform random sample of size $10,000$
as set of training parameters $\Xi_\mathrm{train}\subset\mathcal P$.
To evaluate the errors,
another sufficiently rich set of parameters $\Xi_\mathrm{test}\subset\mathcal P$ is used of size $1000$. For our initial space we choose $N_\mathrm{init} \le 40$, depending on the desired number of eigenvalues $\anzEW$.
We always report the average errors of the reduced approximations given by
\begin{equation*}
\frac{1}{\#\Xi_\mathrm{test}} \sum_{\mu \in \Xi_\mathrm{test}}\frac{\lambdaRB{i}(\mu) - \lambda_i(\mu)}{\lambda_i(\mu)}
\end{equation*}
and comment on the standard deviation at the end of Sect.~\ref{ssec:effectivity} in Remark~\ref{rem:statistics}.

For the generalized coercivity estimate,
we exploit the affine decomposition of the bilinear form
and set
\begin{equation}\label{eq:g_einfach}
  g(\mu) := \min_{q=1,\ldots,Q}\;\frac{\Theta_q(\mu)}{\Theta_q(\muRef)}.
\end{equation}
We emphasize that $g(\mu)$ merely relates the bilinear forms $a(\cdot,\cdot;\mu)$ and $\aRef(\cdot,\cdot)$;
a coercivity estimate for $a(\cdot,\cdot;\mu)$ itself is not required in the present context.
Note that (\ref{eq:g_einfach}) indeed yields an admissible parameter-dependent constant
provided the bilinear forms
$a_q(\cdot,\cdot)$
in (\ref{eq:affine_decomp})
are positive semi-definite
and the coefficient functions $\Theta_q(\cdot)$
in (\ref{eq:affine_decomp})
are positive;
see, e.\,g., \cite[Sect.~4.2.2]{PaRo06}.
This is true for our application.
Better results (i.\,e., a larger lower bound) could be obtained
by the more expensive successive constraint method \cite{HuRoSePa07}.
In the present setting,
the estimate (\ref{eq:g_einfach}) is typically smaller than the exact solution of the corresponding generalized eigenvalue problem by a factor ranging from $0.7$ to $0.98$.

\subsection{Extended selection vs.\ non-extended selection} \label{sec:extended_vs_non}

We first illustrate the necessity of the extended selection for multiple eigenvalues.
Fig.~\ref{fig:POD_adapt_nonadapt}
shows the behavior of a POD method
with (left) and without (right)
the extended selection described in Sect.~\ref{ssec:greedy_ext}
for the first two eigenvalues ($\anzEW = 2$).
Fig.~\ref{fig:Greedy_adapt_nonadapt} shows the same comparison for the greedy method
(Alg.~\ref{alg:greedy_einzeln}).
For both the POD method and the greedy method,
we observe that in the variants without extension
the convergence for the second eigenvalue becomes slower
after a certain number of basis functions has been included.
In contrast,
the extended selection yields convergence curves that approximately coincide.

\begin{figure}[th]
\centering
\includegraphics[width=.33\textwidth]{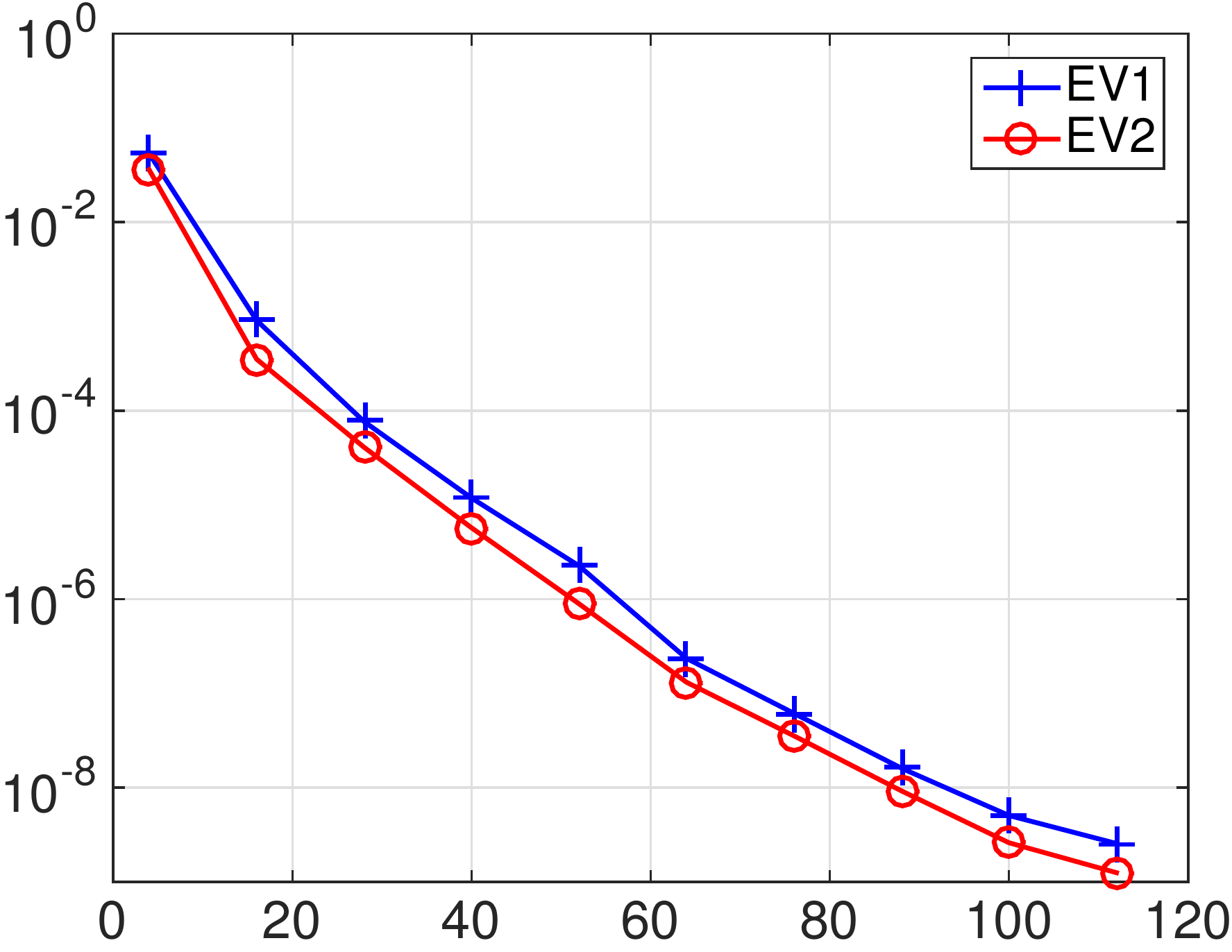} \hspace*{0.2cm}
\includegraphics[width=.33\textwidth]{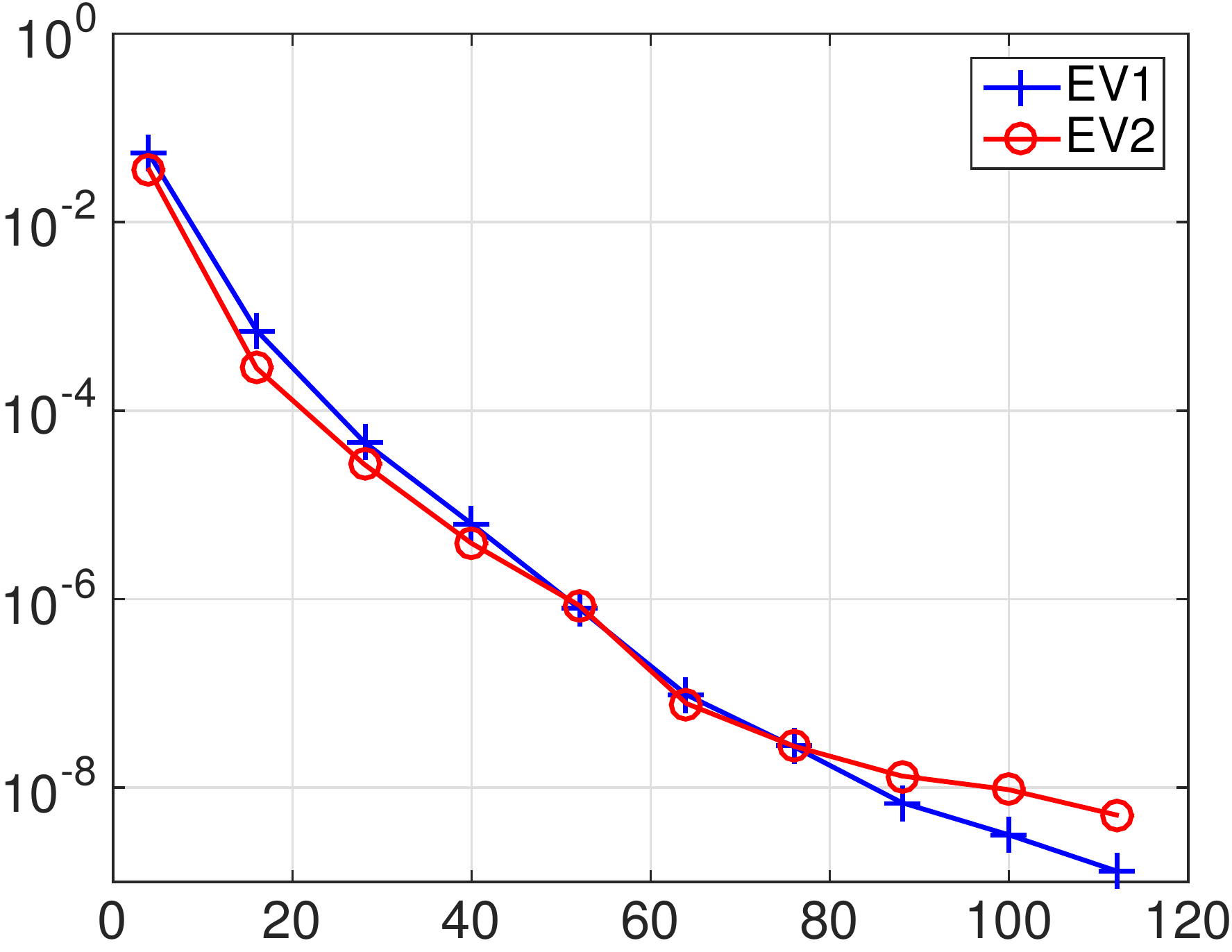} \\
\vspace*{0.2cm}
\includegraphics[width=.33\textwidth]{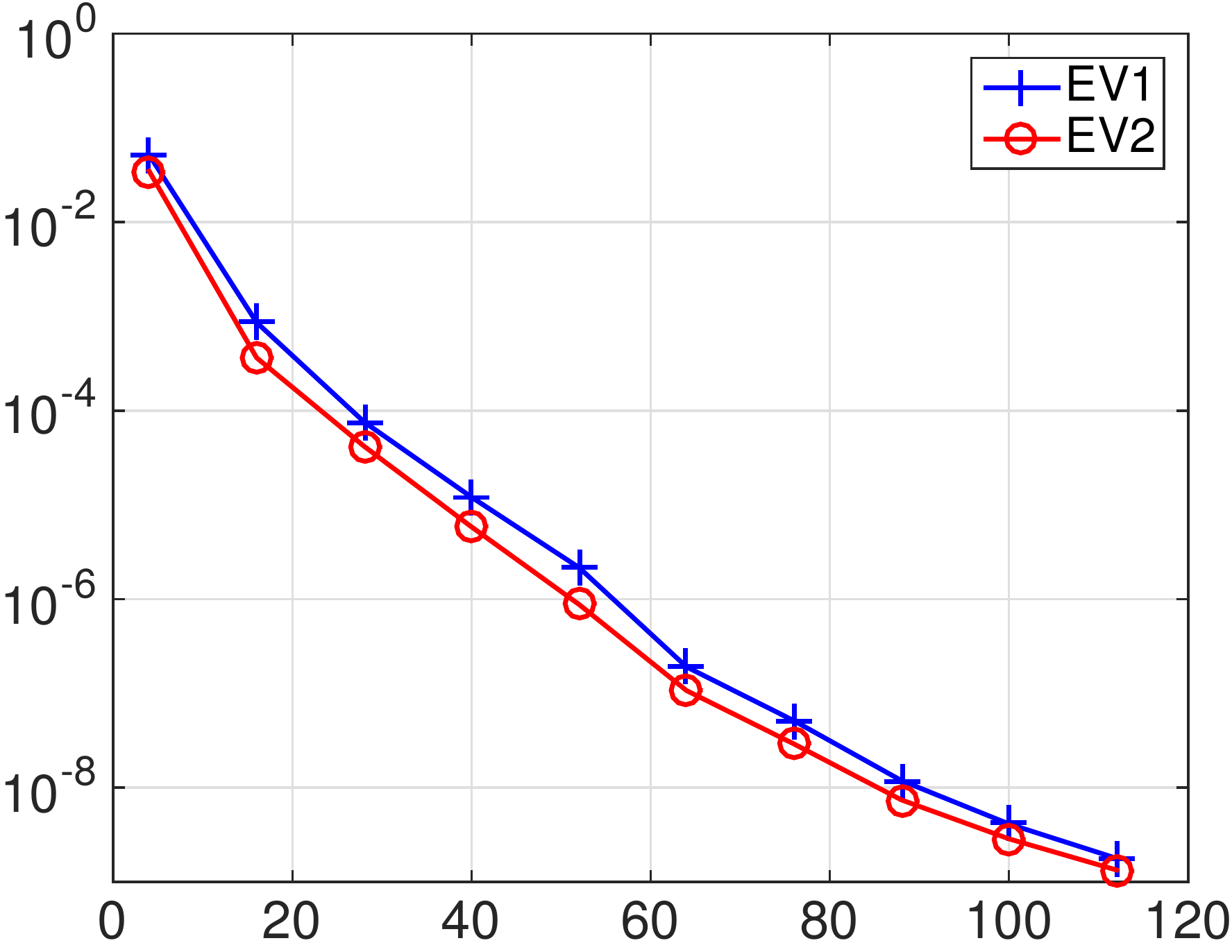} \hspace*{0.2cm}
\includegraphics[width=.33\textwidth]{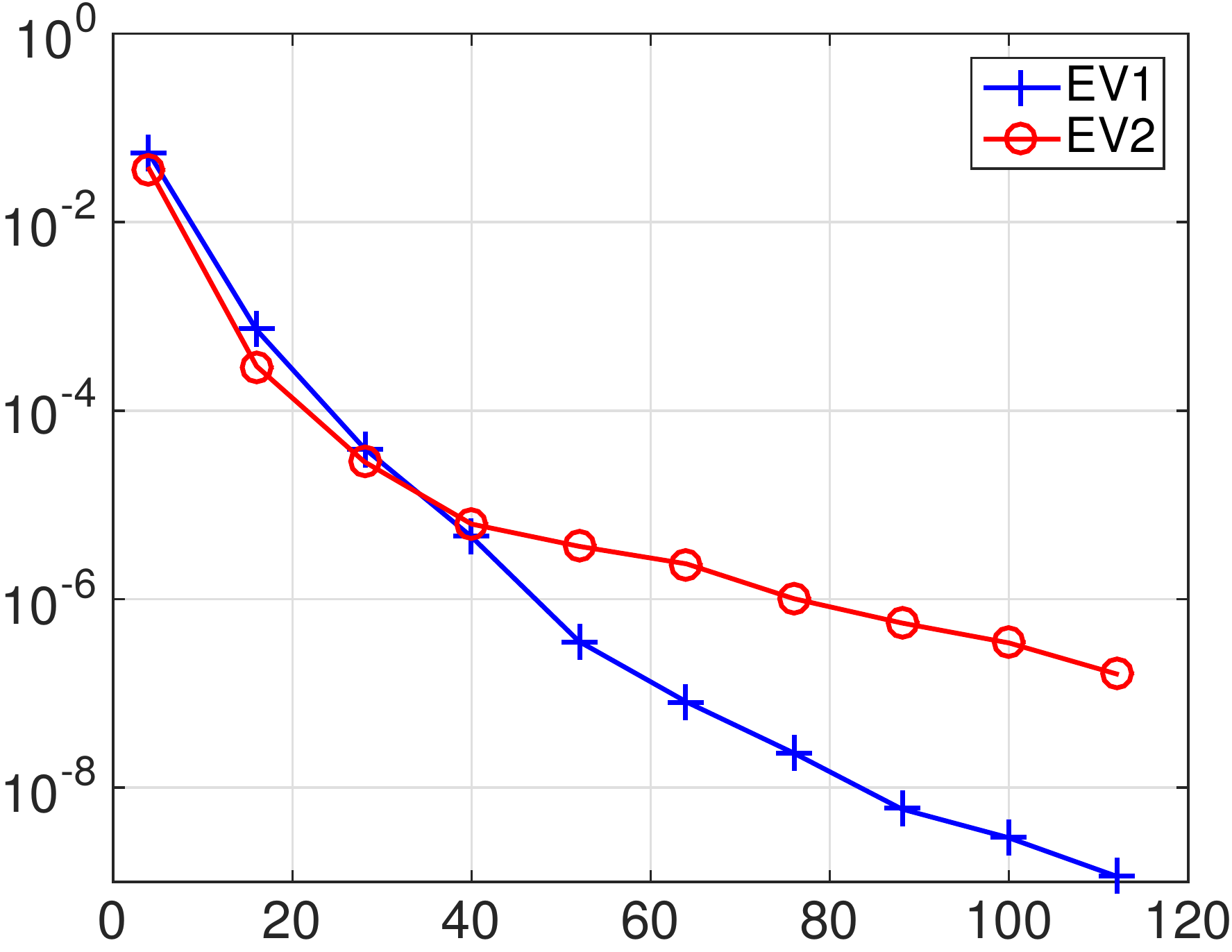} 
\caption{Error decay for the eigenvalues with the POD method: extended (left) vs.\ non-extended (right). First row: training size 10,000; second row: training size 1,000.}
\label{fig:POD_adapt_nonadapt}
\end{figure}

\begin{figure}[th]
\centering
\includegraphics[width=.33\textwidth]{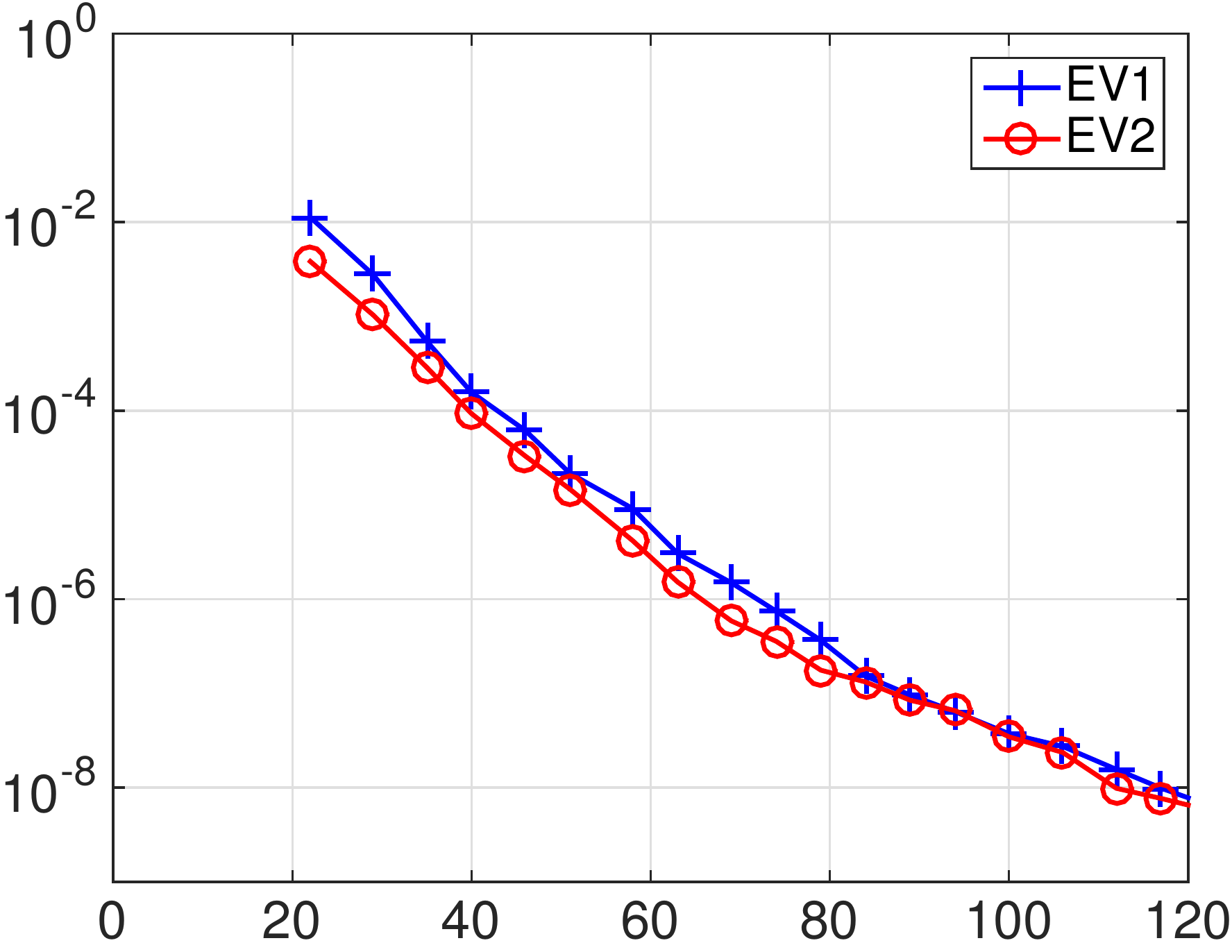} \hspace*{0.2cm}
\includegraphics[width=.33\textwidth]{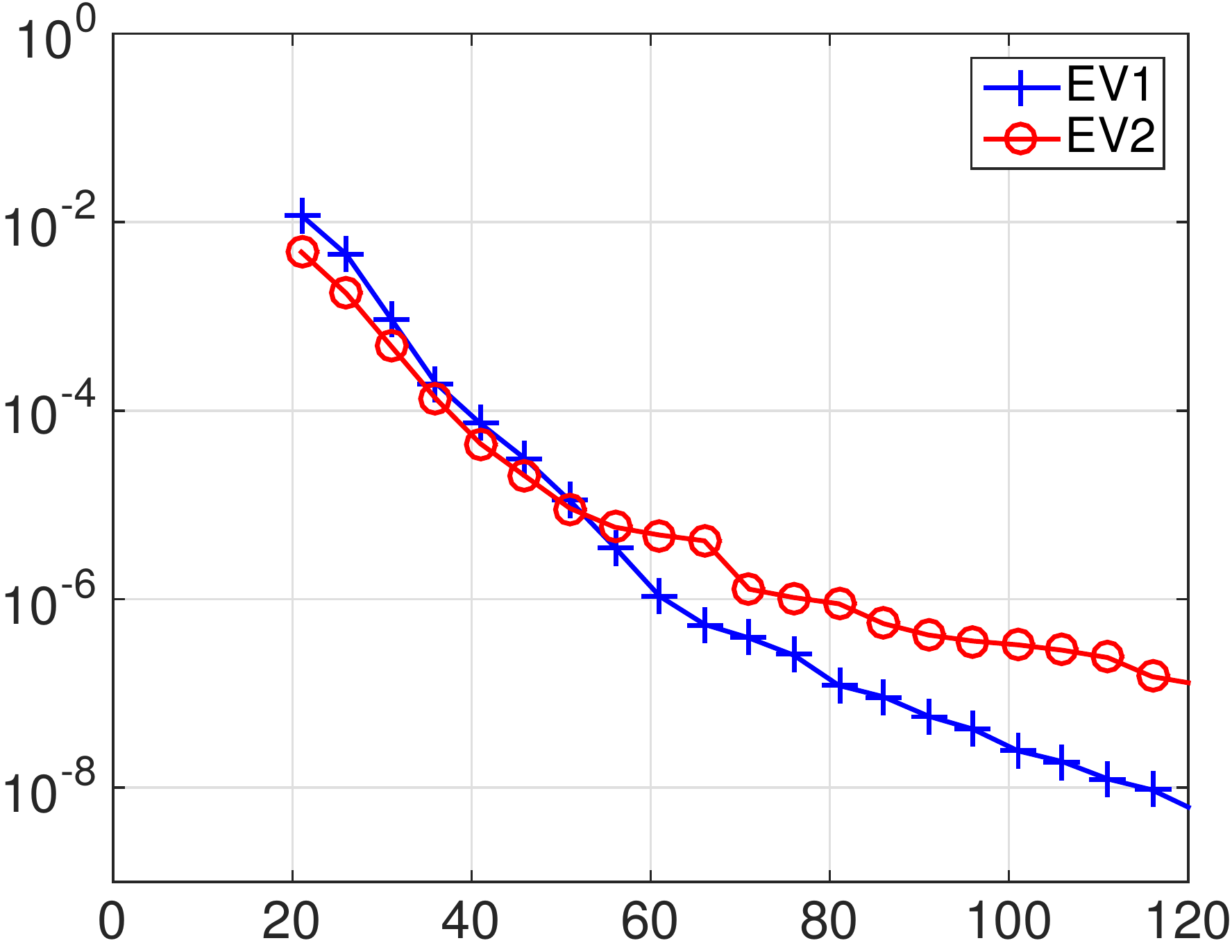} \\
\caption{Error decay for eigenvalues with the greedy method: extended (left) vs.\ non-extended (right); training size 1,000.}
\label{fig:Greedy_adapt_nonadapt}
\end{figure}

The shortcomings of the non-extended methods may be explained by the fact that the second eigenvalue
has multiplicity two for certain parameters and in these cases, for the multiple eigenvalue, the correct eigenfunction is not necessarily chosen.
Note that the effect is more significant for a smaller POD training size (Fig.~\ref{fig:POD_adapt_nonadapt}, second row) as it is less likely that all directions of an eigenspace are present in the snapshot set.
The convergence of the second reduced eigenvalue possibly improves drastically
if, incidentally, the missing component is added during the greedy method.

\subsection{Multi-choice vs.\ single-choice greedy method}

Here, we illustrate the benefit of Alg.~\ref{alg:greedy_einzeln} in comparison to Alg.~\ref{alg:greedy_alle}.
In Fig.~\ref{fig:alle-vs-einzeln},
for $\anzEW=4$,
one can see that with Alg.~\ref{alg:greedy_alle} (left)
the convergence behavior varies over the course of the greedy method
while with Alg.~\ref{alg:greedy_einzeln} (right) all desired eigenvalues exhibit similar convergence.
(This also holds true for the errors in the eigenfunctions not shown here.)

\begin{figure}[th]
\centering
\includegraphics[width=.33\textwidth]{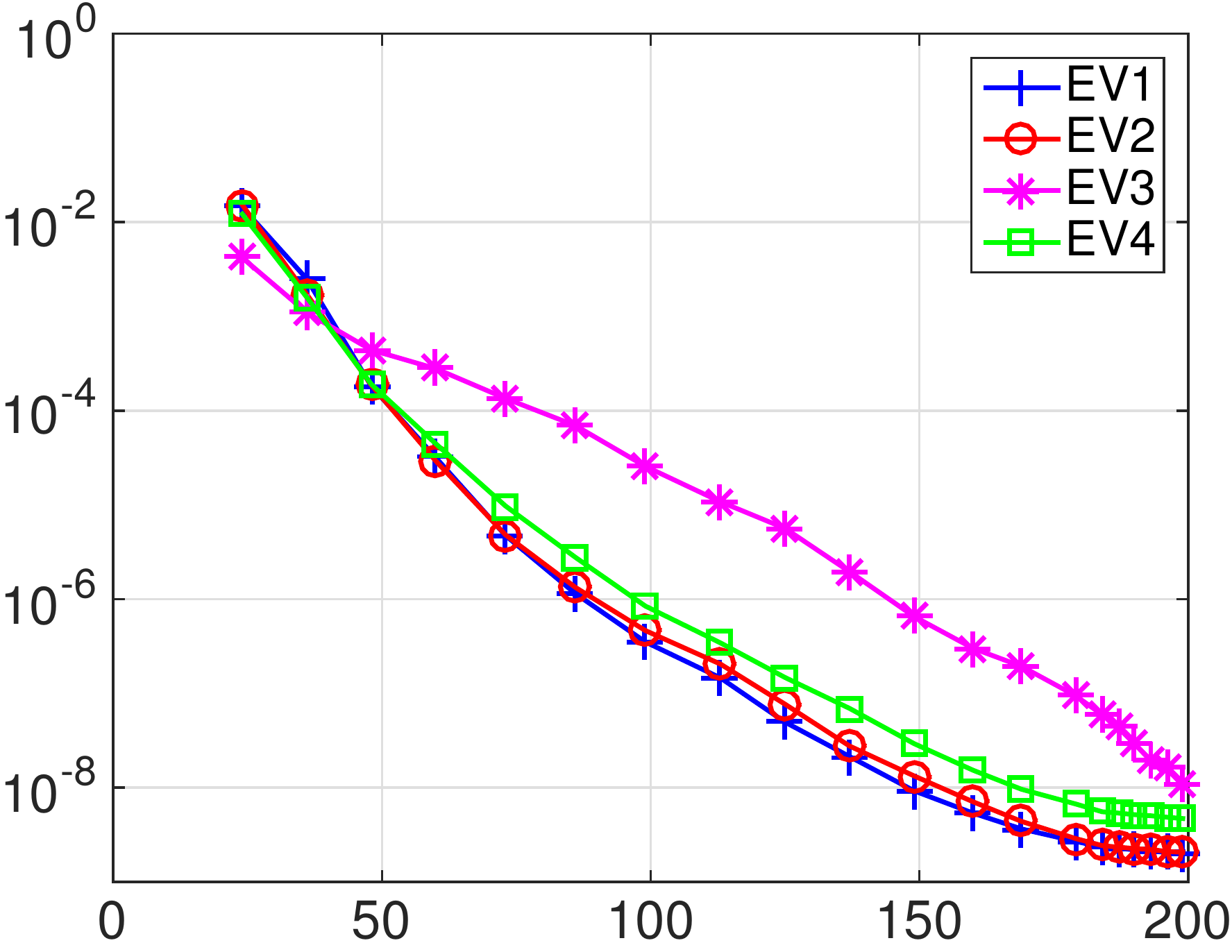}
\hspace*{2mm}
\includegraphics[width=.33\textwidth]{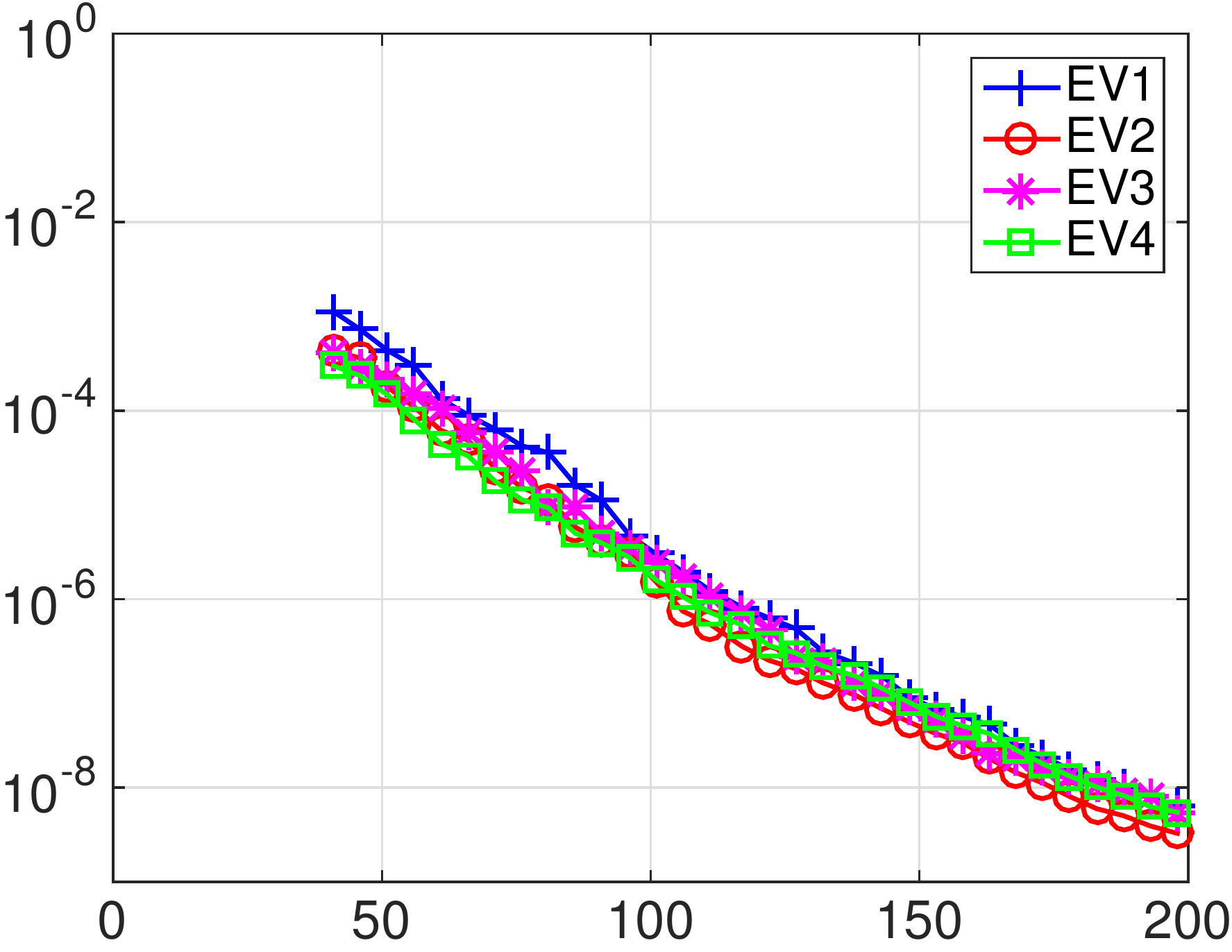}
\caption{RB error decay: comparison of Alg.~\ref{alg:greedy_alle} (left) and Alg.~\ref{alg:greedy_einzeln} (right)
  for $\anzEW=4$}
\label{fig:alle-vs-einzeln}
\end{figure}

The poor convergence of the third eigenvalue
only improves rapidly at $N\approx 170$,
after the other three eigenvalues have reached an accuracy in the order of the target tolerance,
and thus the algorithm only chooses EV\,3.
This effect
(namely an imbalanced resolution of the relevant eigenspaces during the greedy method)
is directly related to the 
inappropriate a~priori assumption of Alg.~\ref{alg:greedy_alle}
that roughly the same number of snapshots corresponding to the first $\anzEW$ eigenvalues should be included in the reduced space.
{
At this point it should also be noted that in general Alg.~\ref{alg:greedy_alle} creates a larger RB space than Alg.~\ref{alg:greedy_einzeln} as soon as more eigenvalues have a poor convergence.
}
\begin{figure}[th]
\centering
\includegraphics[width=.32\textwidth]{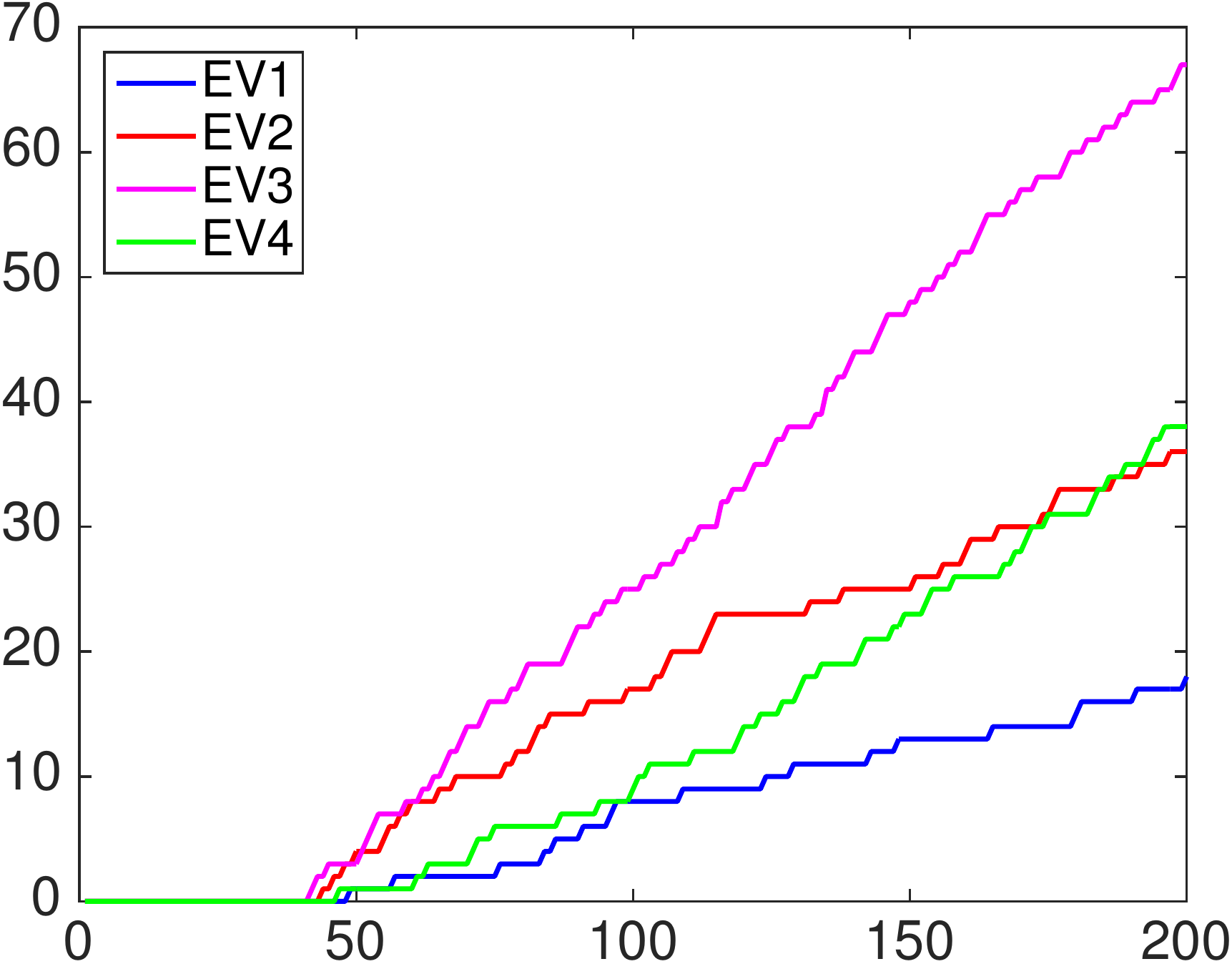}
\hspace*{.25mm}
\includegraphics[width=.322\textwidth]{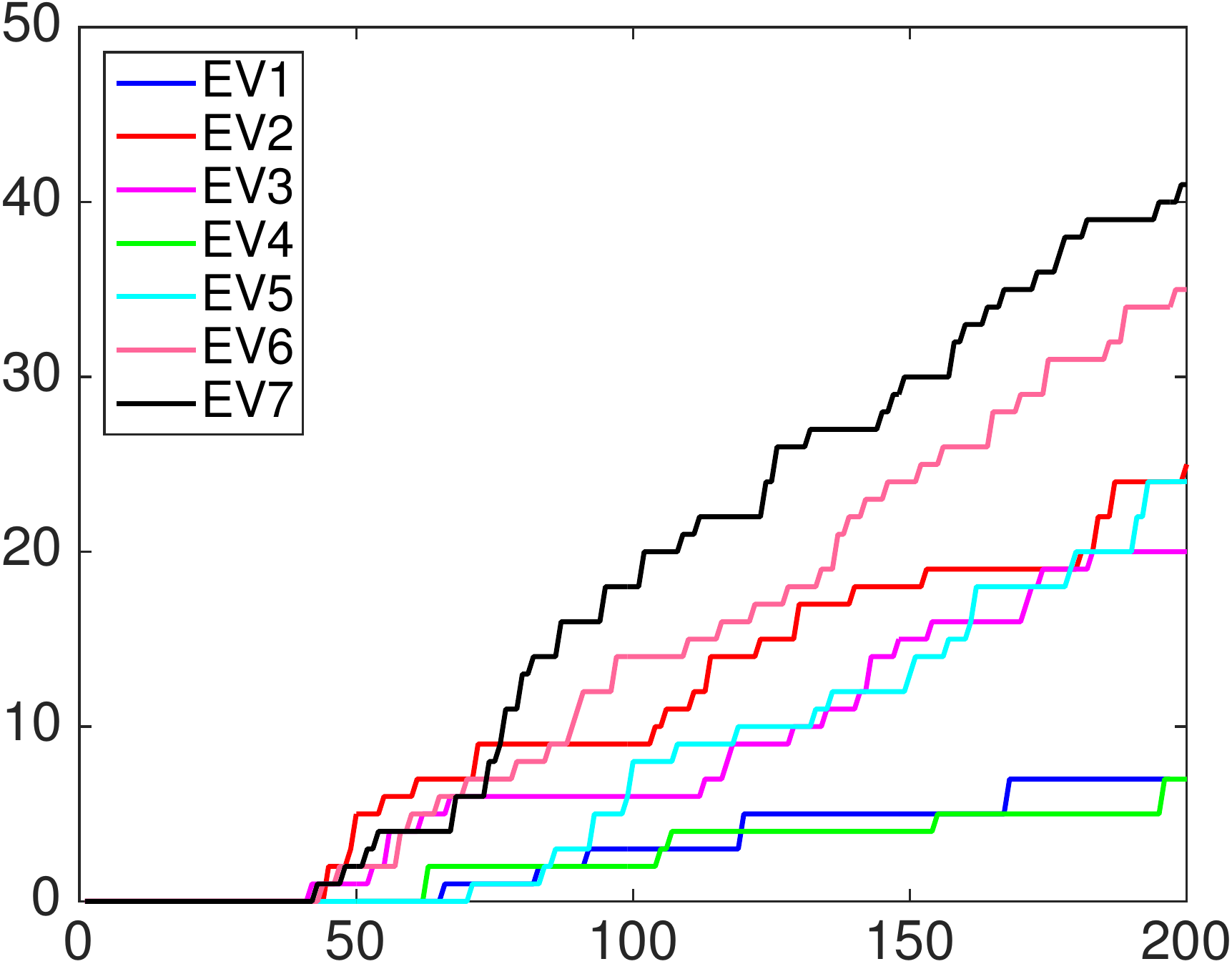}
\hspace*{.25mm}
\includegraphics[width=.33\textwidth]{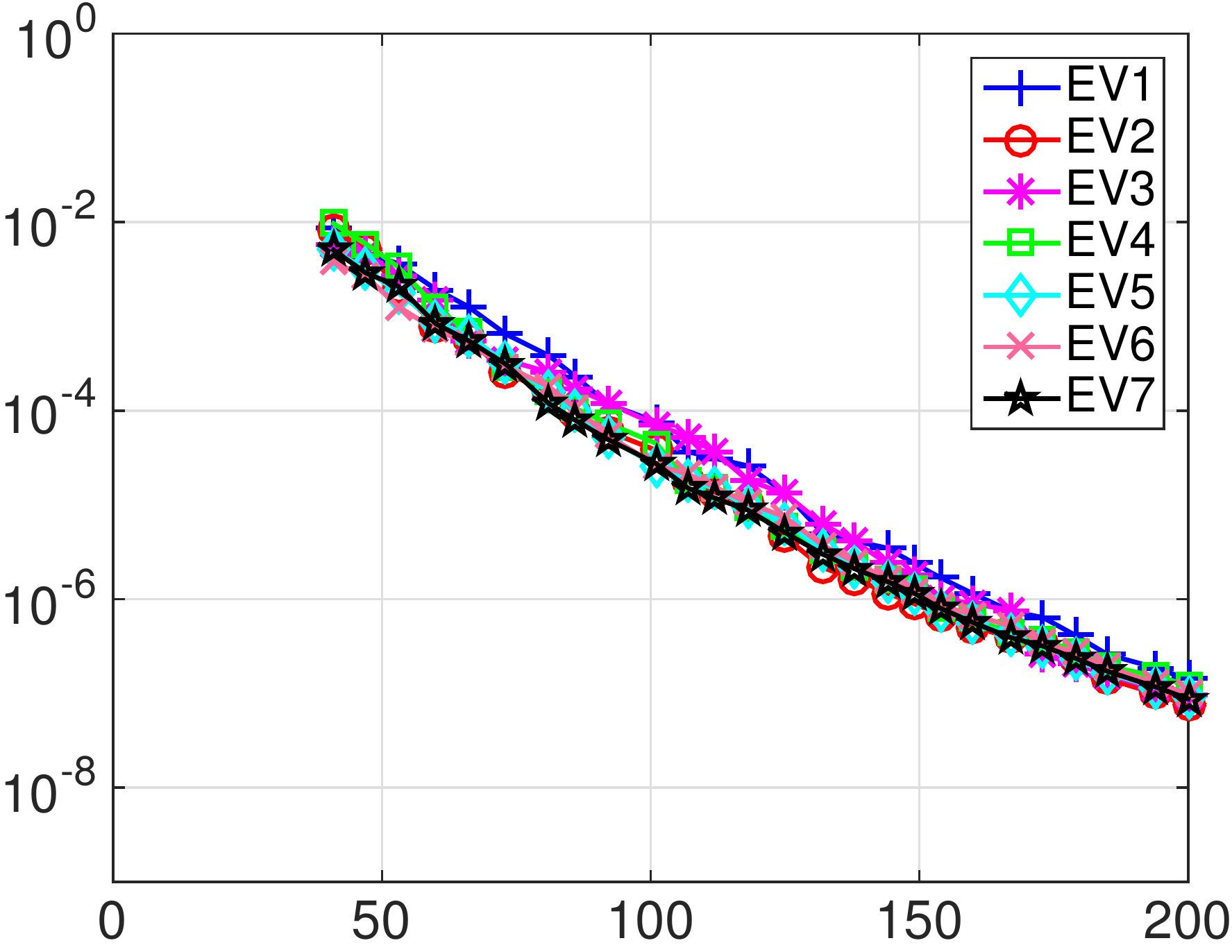}
\caption{
Left and center:
accumulated numbers of chosen eigenfunctions
over the course of Alg.~\ref{alg:greedy_einzeln}
for $\anzEW=4$ and $\anzEW=7$.
Right: error decay for $\anzEW=7$
}
\label{fig:alle-indexcounts}
\end{figure}

To further illustrate the behavior of the single-choice greedy method,
in Fig.~\ref{fig:alle-indexcounts},
we report the accumulated numbers of chosen eigenfunctions
corresponding to $\lambda_1,\ldots,\lambda_\anzEW$
over the course of Alg.~\ref{alg:greedy_einzeln}
for $\anzEW=4$ (left) and $\anzEW=7$ (center),
as selected by the error estimators in line~6.
The reason for the greedy algorithm not selecting any eigenfunctions before a basis size of $40$ is that this is the size of our initial space. 
The respective error decay for $\anzEW=7$ is depicted in 
Fig.~\ref{fig:alle-indexcounts} (right).
Note that the good convergence
(in particular, similar rates for all outputs of interest simultaneously)
is achieved by a rather uneven distribution.
The diagrams indicate that, for both values of $\anzEW$,
larger eigenvalues as well as possibly double eigenvalues are preferred by the algorithm.
This and the fact that, although fewer eigenfunctions are included for the smaller eigenvalues than for the larger ones, but nevertheless the error decay is equal, mean that the eigenfunctions corresponding to larger eigenvalues are effectively used to approximate the ones corresponding to smaller eigenvalues.

\subsection{Effectivity of the greedy method}\label{ssec:effectivity}

In this section,
we investigate the performance of the greedy method
in more detail.
For this purpose,
we also consider the effectivity
 numbers $\gamma_i$, $1 \leq i \leq K$,  of the error estimators and its maximal ratio $R$ defined by
$$
\gamma_i:= \frac{1}{\#\Xi_\mathrm{test}} \sum_{\mu \in \Xi_\mathrm{test}} \frac{\eta_i(\mu) \cdot \lambda_i(\mu)}{\lambdaRB{i}(\mu)-\lambda_i(\mu)},
\quad R := \frac{\max_{i=1, \ldots , K} \gamma_i}{\min_{i=1, \ldots , K} \gamma_i} .
$$
As already mentioned,
the estimators derived in Sect.~\ref{sec:estimator} are of asymptotic character
and therefore generally not reliable for small $N$.
To prevent a misleading selection of basis functions in the first few greedy steps,
the initialization described in Sect.~\ref{ssec:smallPOD} is used
to generate an initial basis.

\begin{figure}[th]
\includegraphics[width=.3\textwidth]{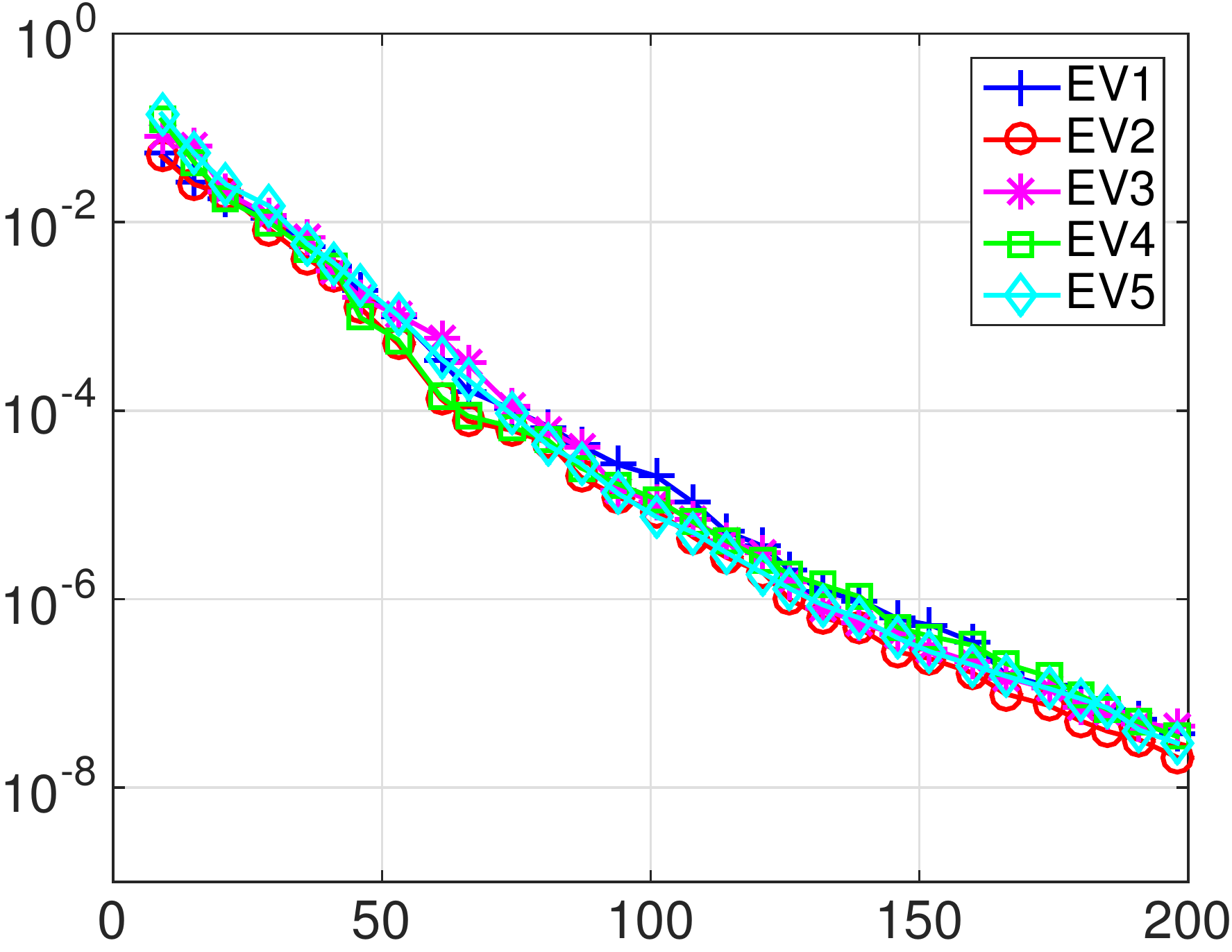}
\includegraphics[width=.3\textwidth]{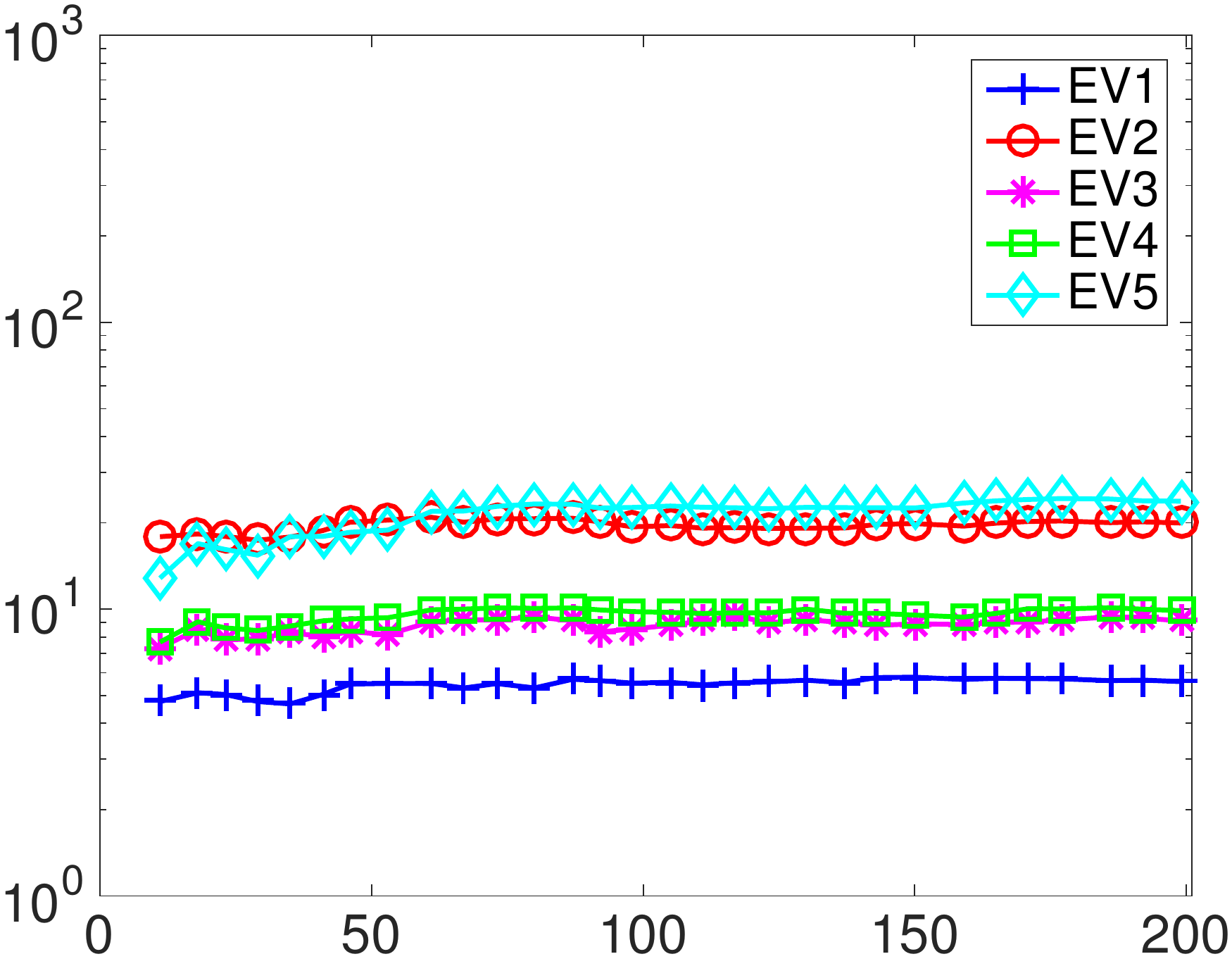}\hspace*{2mm}
\includegraphics[width=.295\textwidth]{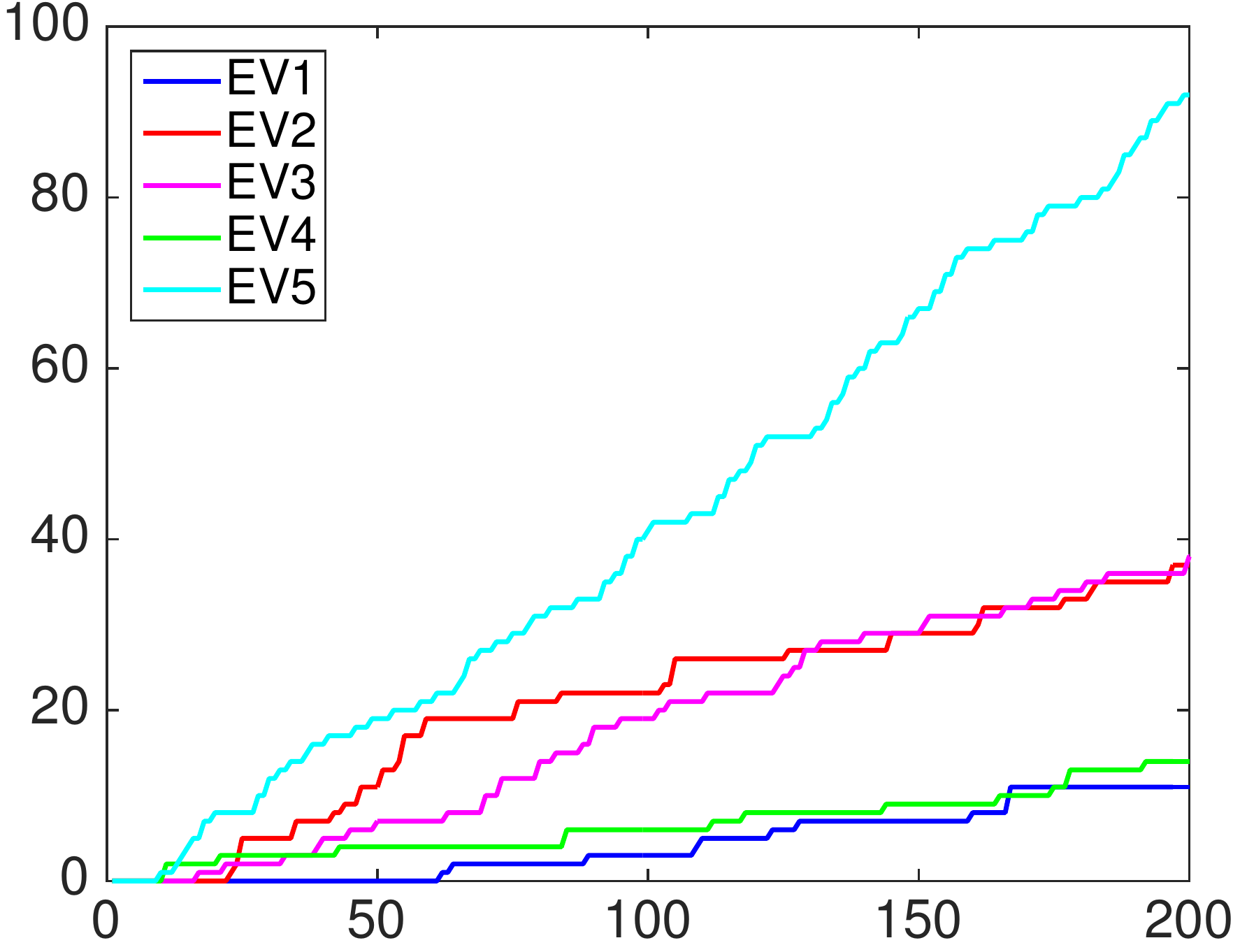} \\[1mm]
\includegraphics[width=.3\textwidth]{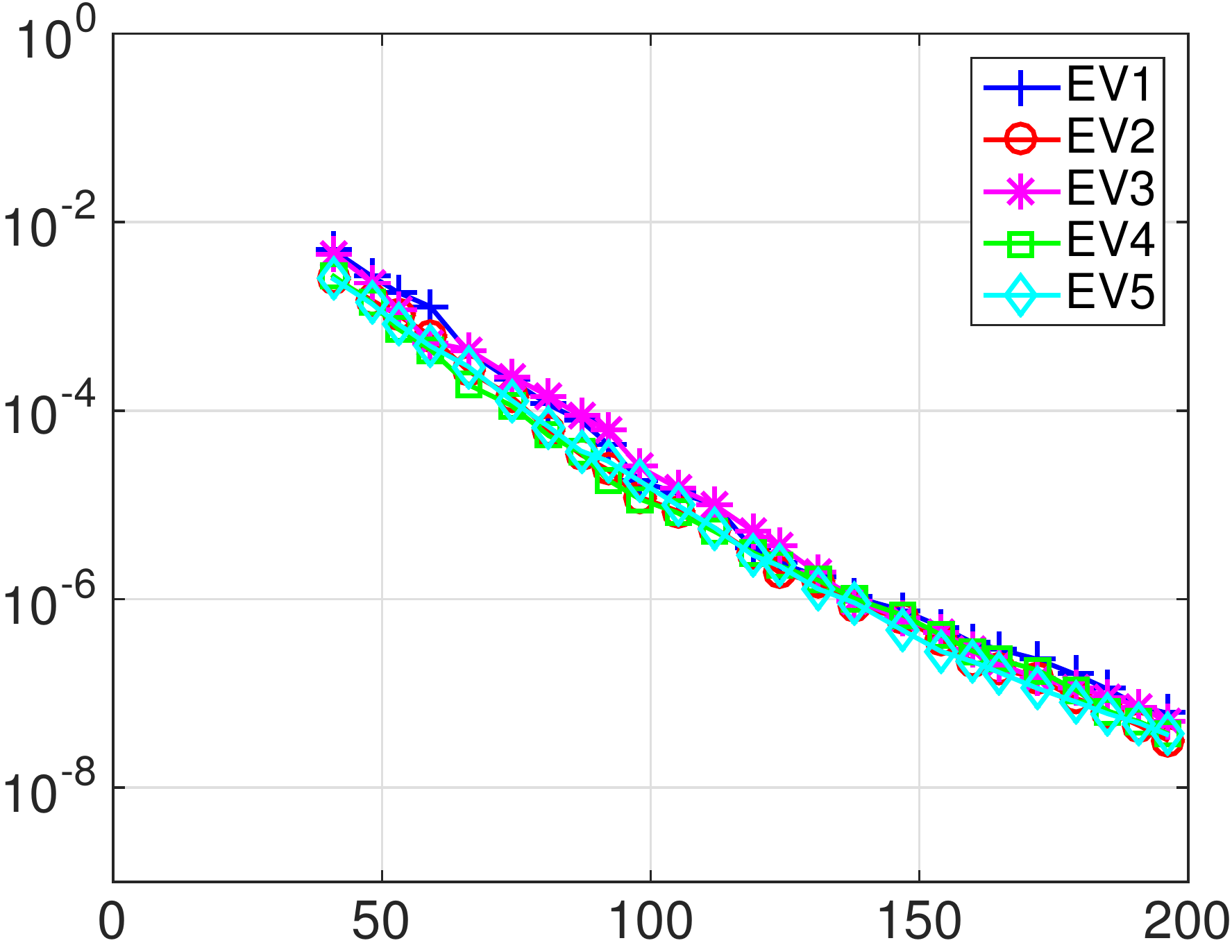}
\includegraphics[width=.3\textwidth]{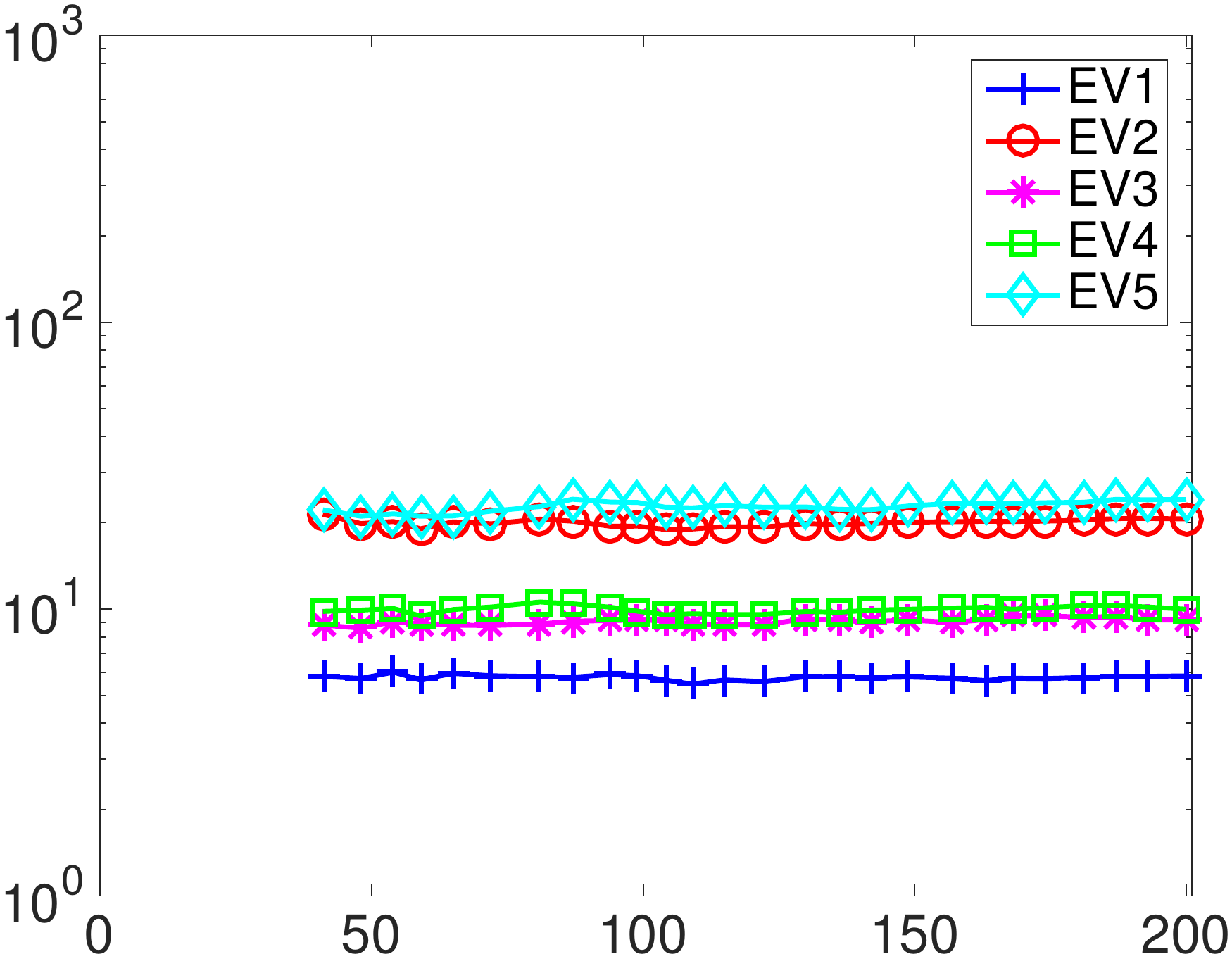}\hspace*{2mm}
\includegraphics[width=.295\textwidth]{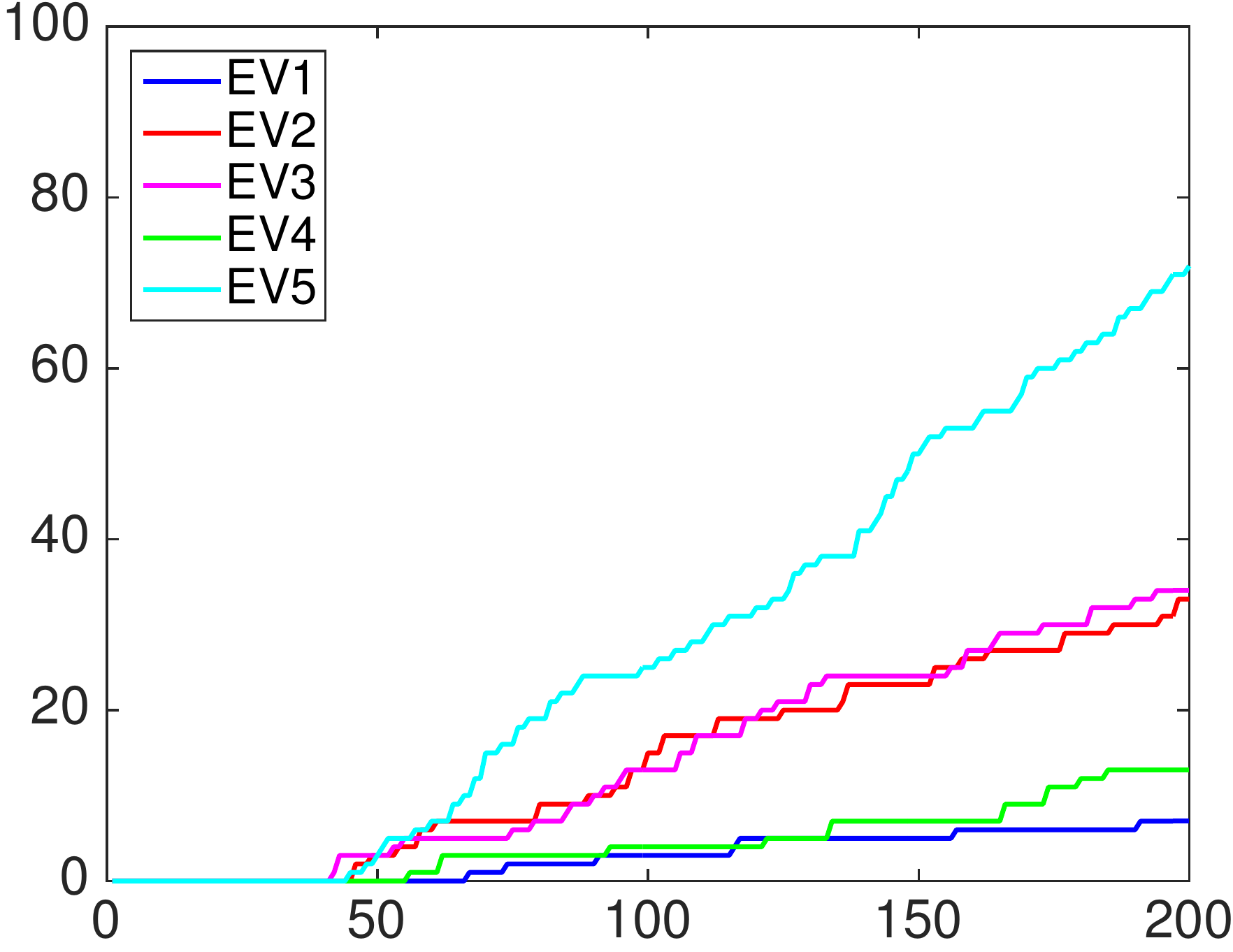}
\caption{Comparison of Alg.~\ref{alg:greedy_einzeln}
without (top) and with (bottom) the initialization described in Sect.~\ref{ssec:smallPOD}
for $\anzEW=5$ { for a selected case in which the greedy algorithm without initialization does not fail.}
RB error decay (left),
effectivity numbers (center)
and accumulated index counts (right)}
\label{fig:POD-and-Greedy-vs-Greedy}
\end{figure}

Fig.~\ref{fig:POD-and-Greedy-vs-Greedy} shows the error decay (left),
the effectivity numbers of the a~posteriori estimators (center)
and the accumulated index counts (right) for $\anzEW=5$
with and without the initialization.
In this case,
a similar convergence is achieved for both algorithms, and 
the index count plots also shows a similar behavior.
{
In the preasymptotic range, we observe a difference in the effectivity numbers. Without initialization these numbers possibly depend sensitively on the selected snapshots. While this does not influence the overall performance for $\anzEW=5$, for $\anzEW=7$ we do get extremely poor results if we start directly with the greedy algorithm. This is caused by the fact that the approximation of $\tilde{d}_i(\mu)$ by $d_i(\mu)$ is then not reliable. Thus we always include the initialization step in our adaptive algorithms.
}

\begin{figure}[th]
\begin{center}
\includegraphics[width=.33\textwidth]{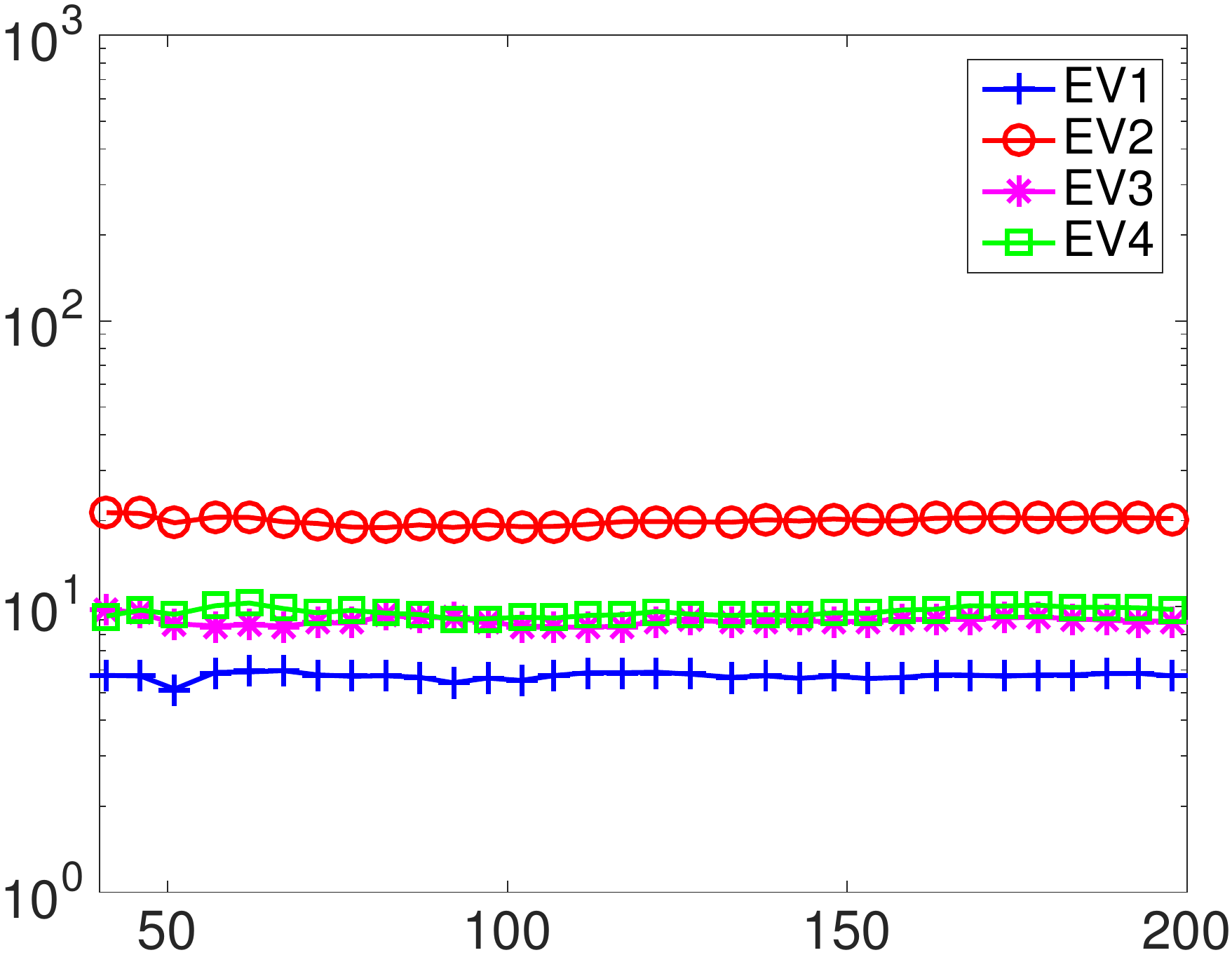}\hspace*{0.2cm}
\includegraphics[width=.33\textwidth]{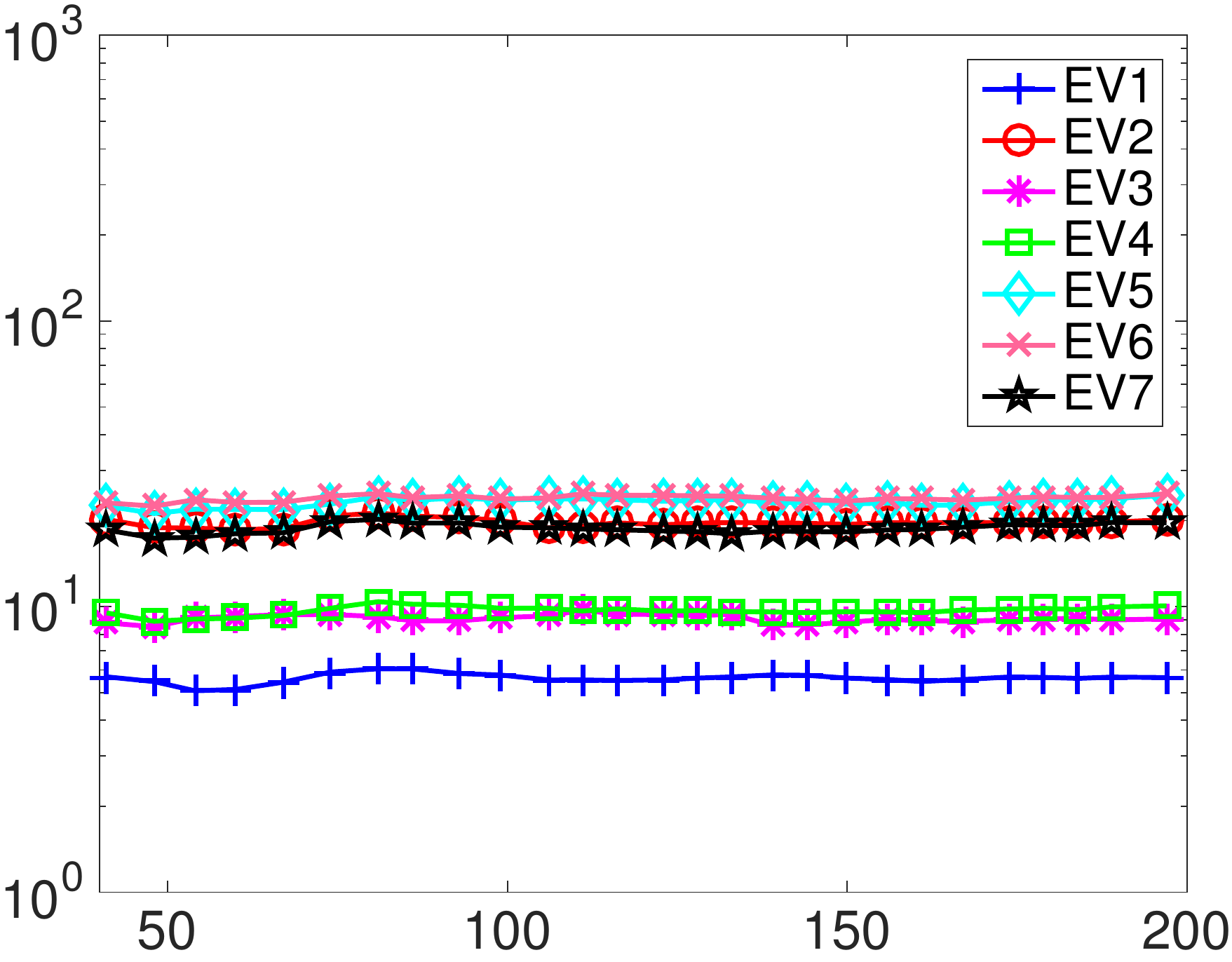}
\end{center}
\caption{Effectivity numbers of the estimators for $\anzEW=4$ (left) and $\anzEW=7$ (right)}
\label{fig:Effectivity}
\end{figure}

In our experiments,
the described initialization always
prevents the effectivity numbers from {having jumps}
and leads to good convergence of the greedy methods.
For instance,
Fig.~\ref{fig:Effectivity} shows the effectivity measures 
corresponding to the error curves
from Fig.~\ref{fig:alle-vs-einzeln} (right)
and Fig.~\ref{fig:alle-indexcounts} (right).
The effectivities are virtually constant and close together which is reflected in a small value of $R$. This is of crucial importance for the performance of our Alg.~\ref{alg:greedy_einzeln}.
In all our settings $R$ is below five, e.\,g., $R=3.41 $  for $\anzEW=4$.
Note that  for $\anzEW=4$  and $\anzEW=7$,
the same eigenvalues show similar effectivities.
A high effectivity ratio $R$ possibly leads to an oversampling of the eigenfunctions associated with the indices of a high effectivity and thus a loss in the performance.
{
At this point although our error estimators are for eigenvalues, we want to show that also the effectivities for the eigenvectors are constant and close together. To do so we depict the results in Fig. \ref{fig:Effectivity_EF}.
}
\begin{figure}[th]
\begin{center}
\includegraphics[width=.33\textwidth]{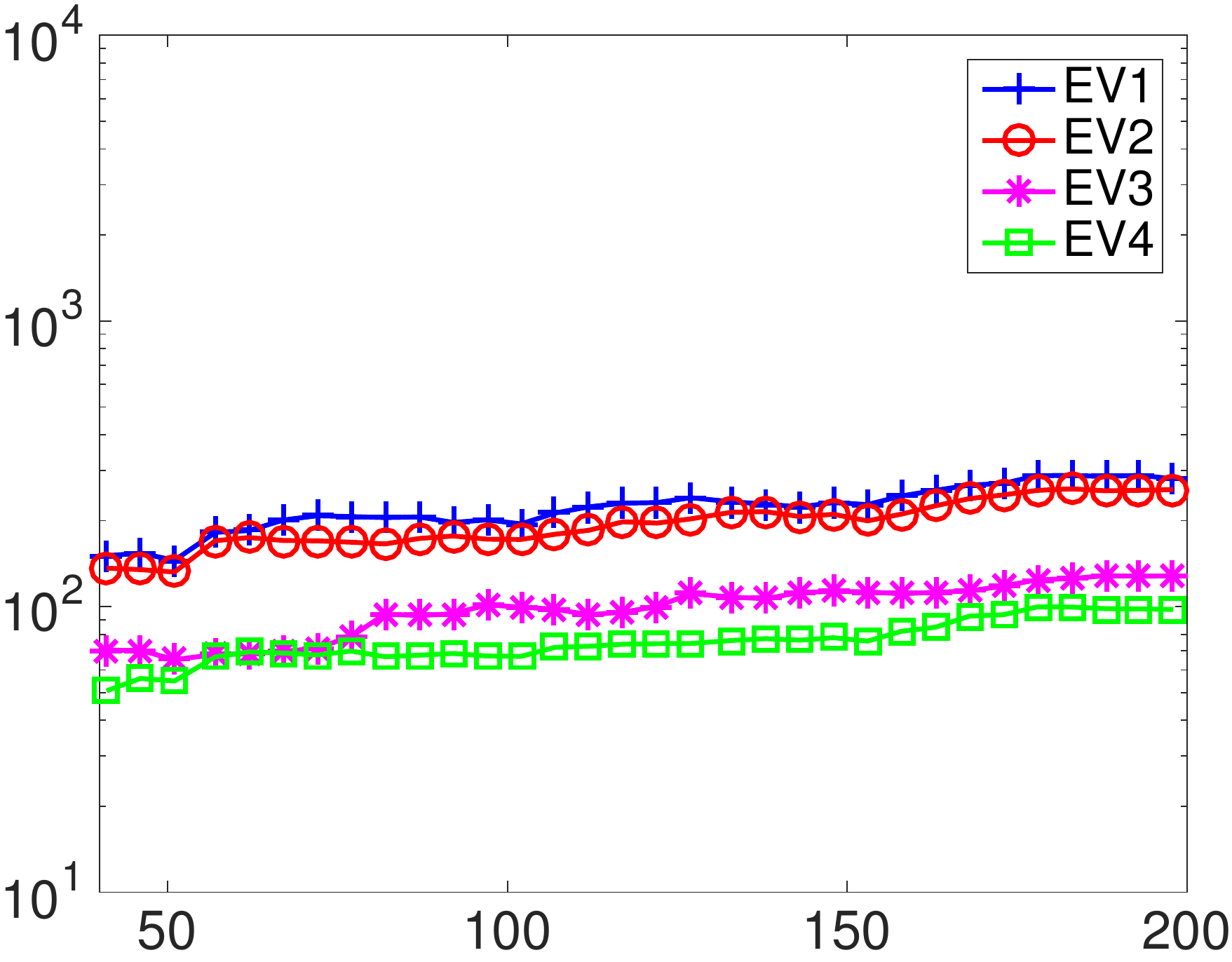}
\end{center}
\caption{Effectivity numbers for the eigenfunctions for $\anzEW=4$}
\label{fig:Effectivity_EF}
\end{figure}

After having demonstrated the performance of the single components of our algorithm,
let us compare the results of our greedy method using the error estimator and the best components
with the convergence of the POD method; cf.\ Sect.~\ref{ssec:POD-RB}.
Comparing the error plots in Fig.~\ref{fig:PODRB_K47} with the ones
in Fig.~\ref{fig:alle-vs-einzeln} (right)
and Fig.~\ref{fig:alle-indexcounts} (right),
we see that we achieve
very similar convergence behavior.
In particular,
the error curves 
of our simultaneous reduced basis approximation
for the individual eigenvalues 
are similarly close to each other.
Moreover,
the accuracy reached at $N\approx 200$ differs only by a factor of roughly ten.
We recall that the POD method uses the full training set (namely 10,000 finite element solutions in this case which leads to a computation time of over 10 h) to reach this accuracy while the greedy method only needs a couple of hundred detailed simulations and the evaluation of the estimator which leads in this case to a compuation time of 6-7 h.  It should be noted that this gap in computation time between POD and Greedy increases further with the complexity of the detailed solution.

{
Let us emphasize that the bounds from \cite{Pau08}, i.\,e., $d_i(\mu)^2$ in the denominator of (\ref{eq:eta_i}) instead of $d_i(\mu)$, lead to a large ratio of the maximal and minimal effectivity value and thus to poorer results in the multiple output case. Highly different effectivity numbers result in an over-selection of eigenfunctions associated with the largest effectivity numbers and thus in a performance loss, hence, to a much less attractive greedy algorithm.
}
\begin{figure}[th]
	\begin{minipage}{0.49\textwidth} 
	\includegraphics[width=\textwidth]{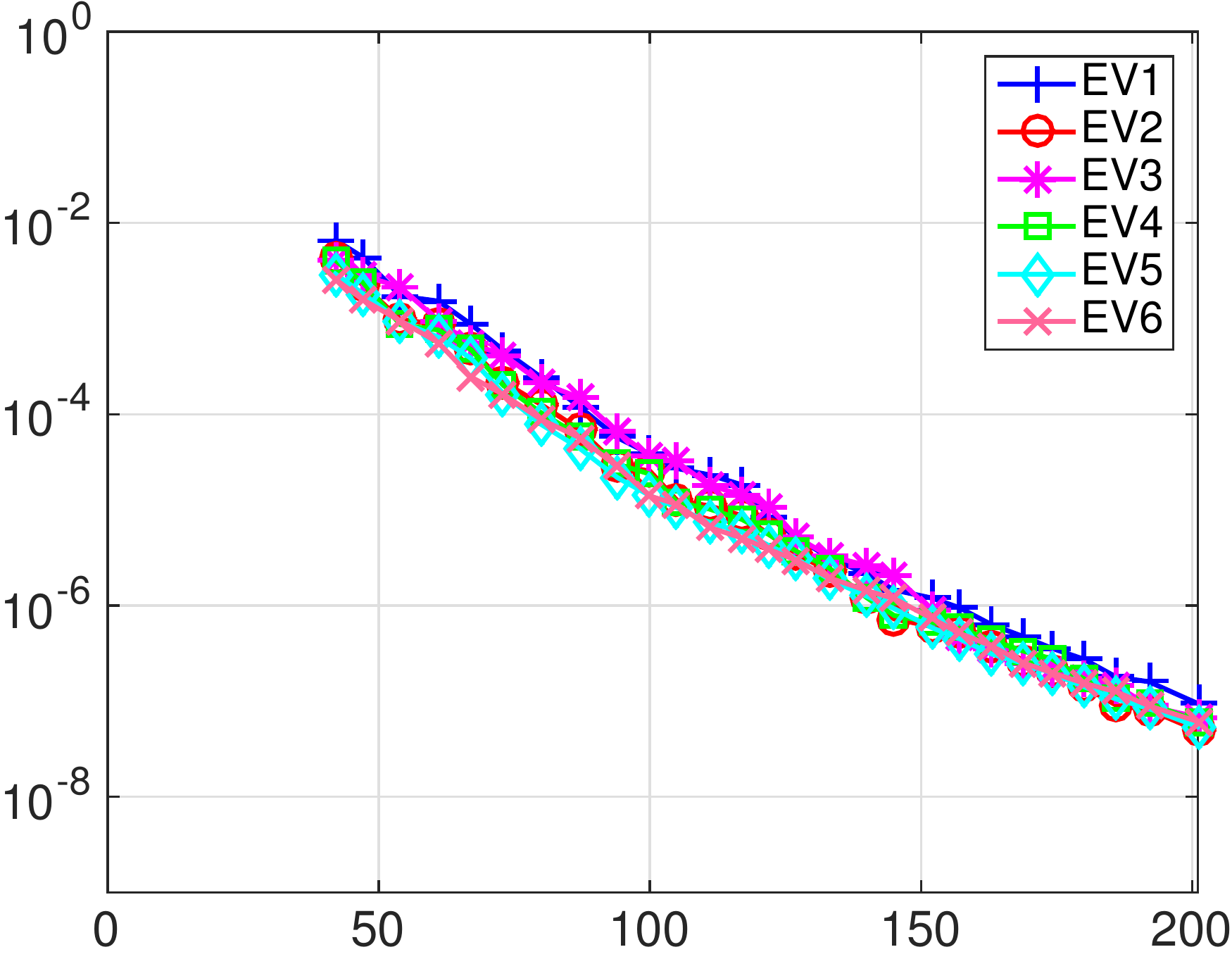}
	\end{minipage}
	\hfill
	\begin{minipage}{0.5\textwidth}
	\includegraphics[width=0.49\textwidth]{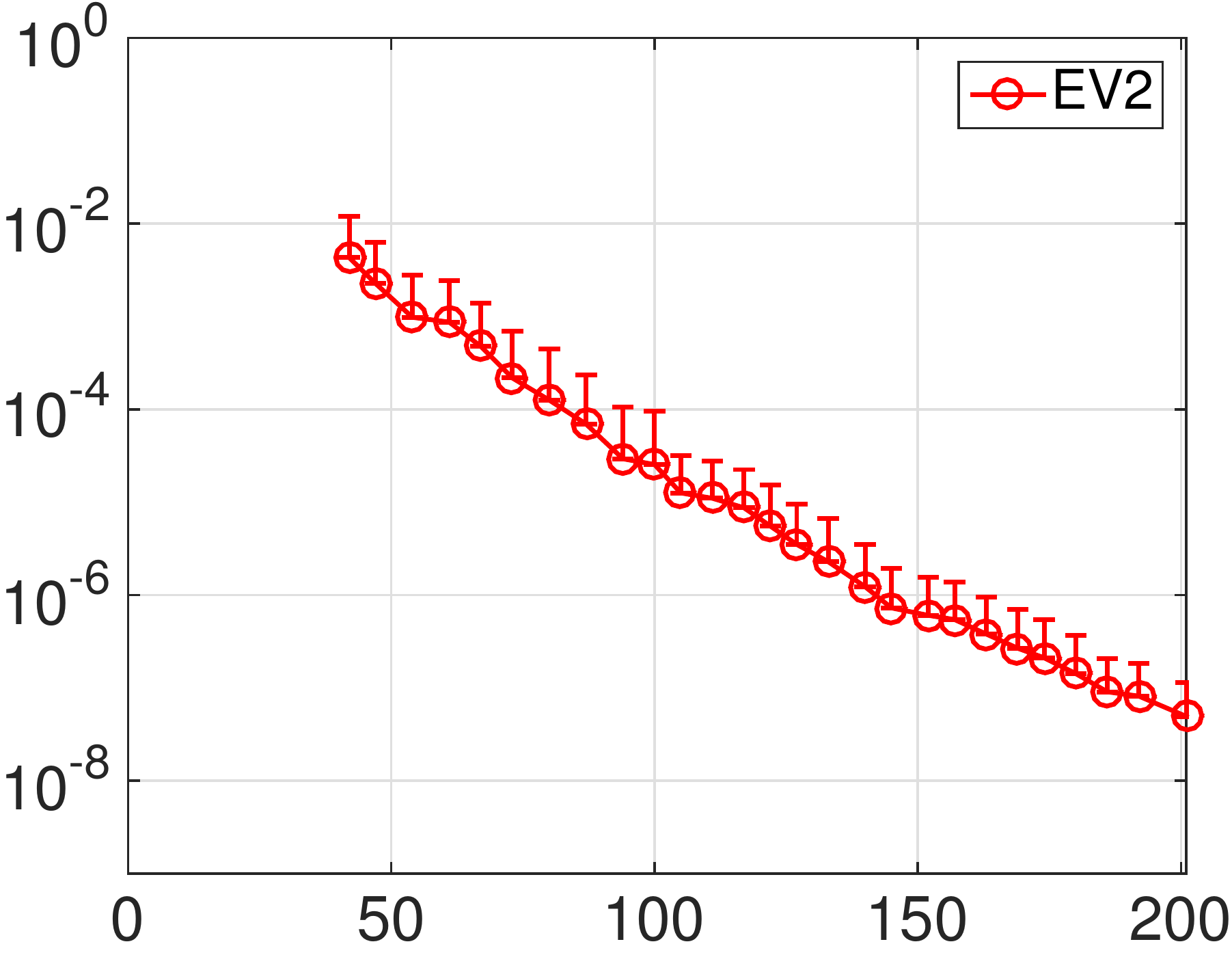}
  	\includegraphics[width=0.49\textwidth]{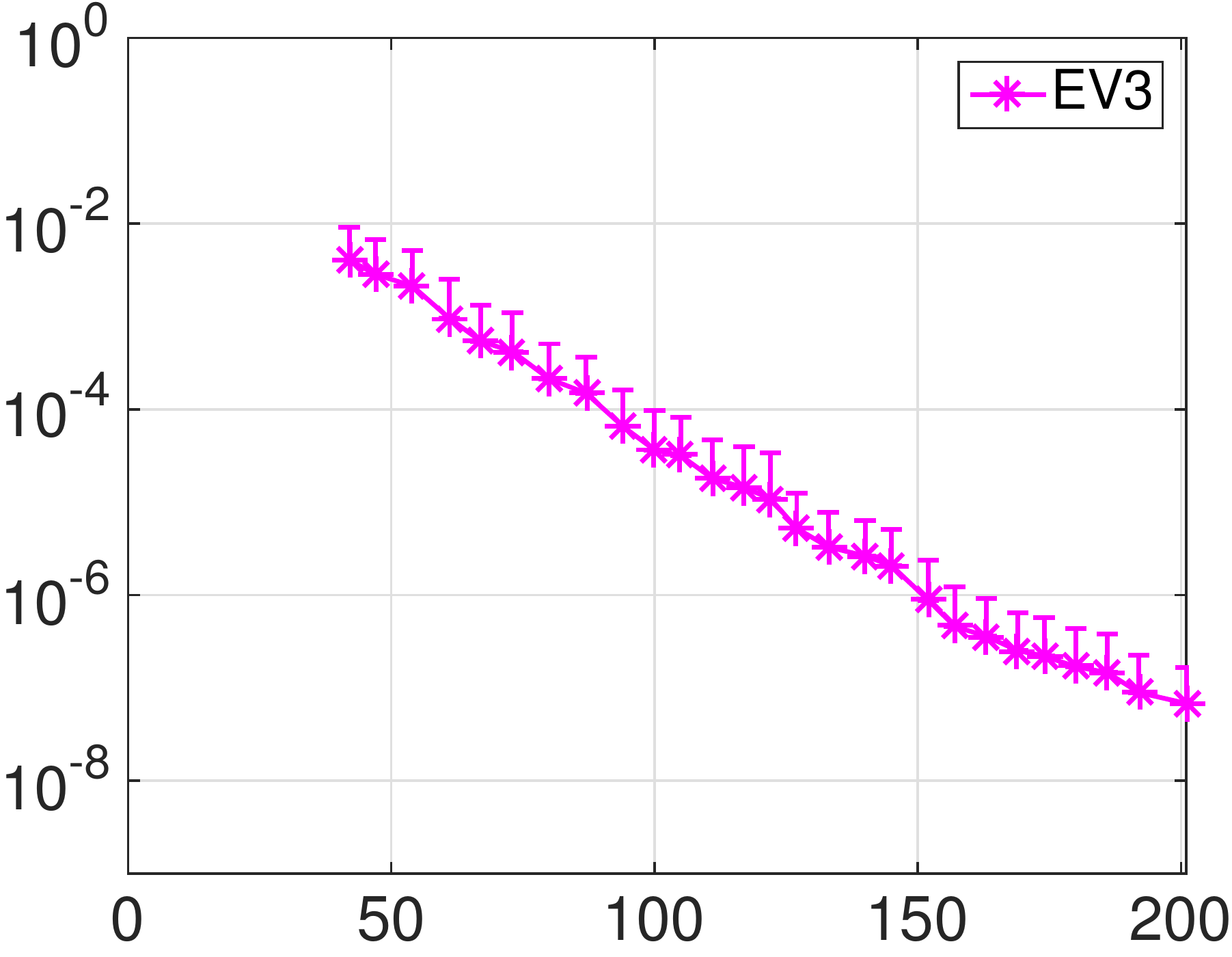} \\
  	\includegraphics[width=0.49\textwidth]{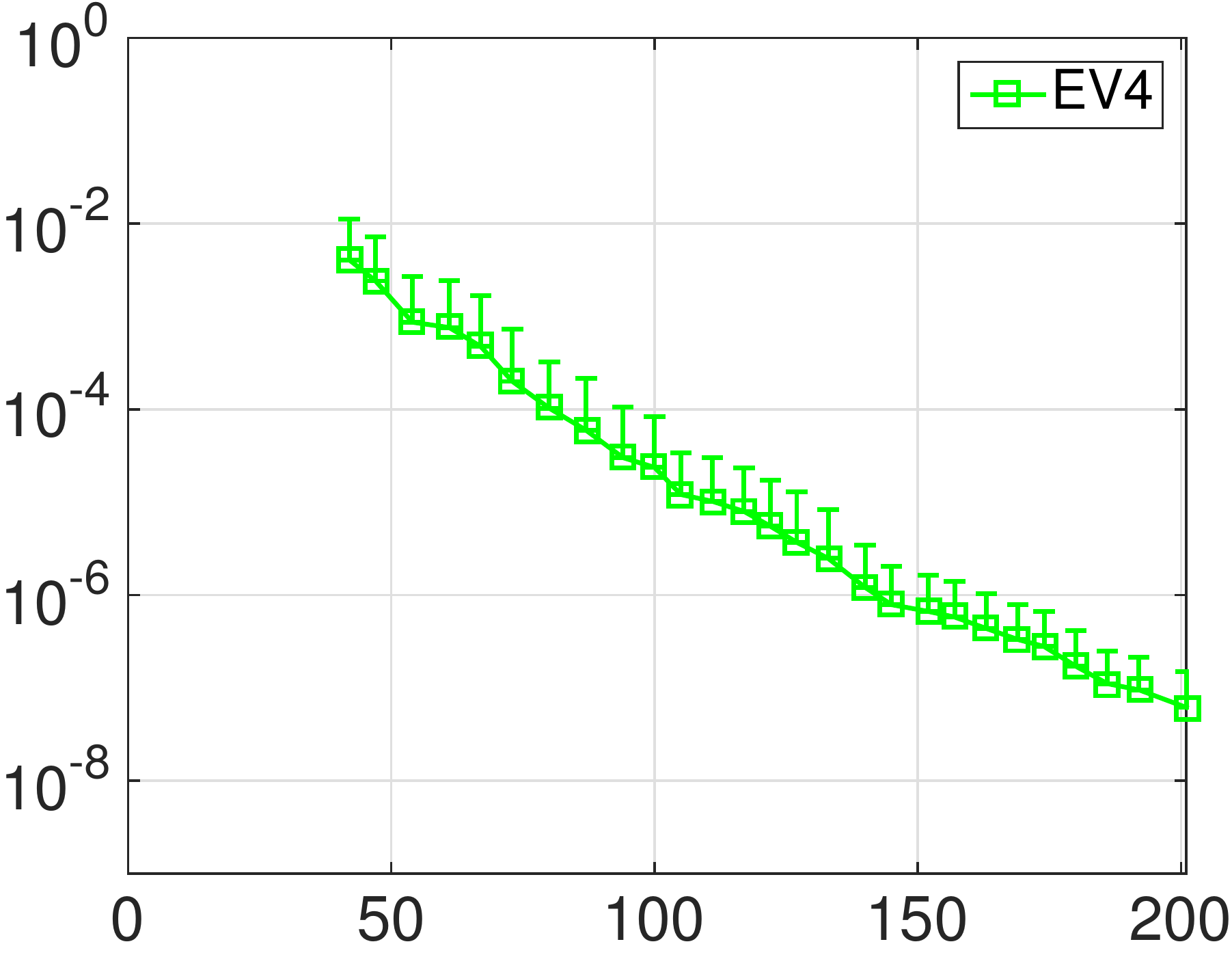} 
	\includegraphics[width=0.49\textwidth]{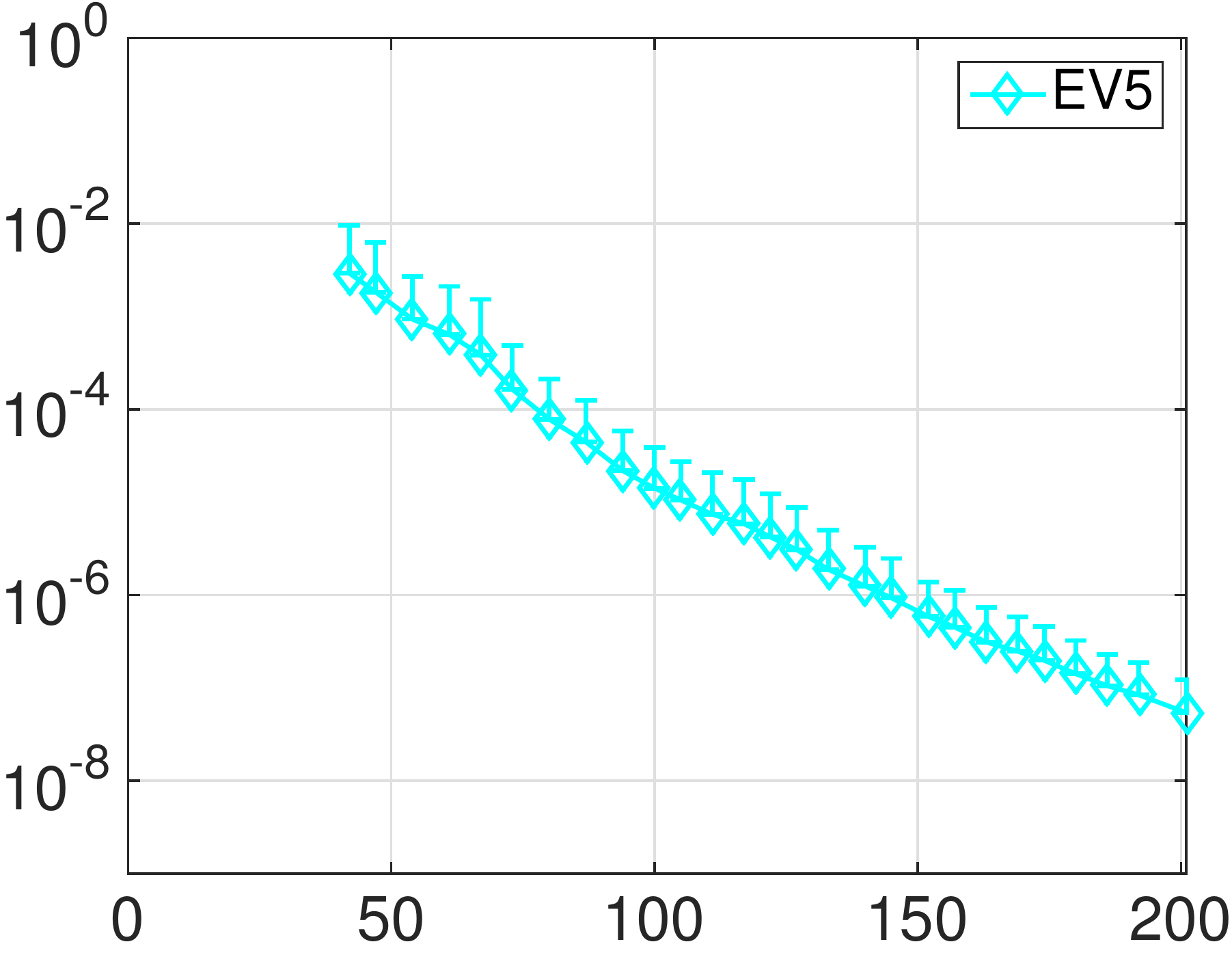}
	\end{minipage}
 \caption{RB discretization error for $\anzEW=6$ with standard deviation (as unidirectional error bar) for EV\,2 to EV\,5}
 \label{fig:STD_EW_6}
\end{figure}

\begin{rmrk}[Error evaluation]\label{rem:statistics}
  For completeness,
  Fig.~\ref{fig:STD_EW_6} shows a convergence plot 
  including the standard deviation
  for $\anzEW=6$.
  In the semilogarithmic plots,
  one can see that the standard deviation is always in the order of the (relative) discretization error itself.
\end{rmrk}

\subsection{Speed-up} \label{ssec:run_time}
The speed-up was calculated serially using MATLAB on a Mac laptop; the standard routine eigs, which is based on ARPACK \cite{ARP98}, was used for solving the eigenvalue problems. We used linear finite elements for the discretization space~$V$.
With our greedy method as introduced above, a significant speed-up in the computation of eigenvalues can be achieved, as is shown for the settings of $K=2$ to $K=7$ eigenvalues in Table \ref{tab:Balken_Speed_up}. 
Here, the calculation of the detailed solutions takes in the range of $3.5$ to $3.6$ seconds, while the calculation of the reduced solution is possible in $0.021$ to $0.078$ seconds, resulting in a speed-up of $140$ to $43$. The higher the value of $N$, the longer the reconstruction time, but in this case the increase is approximately linear in $N$.

Moreover, it should be noted that the more accurately the detailed solution is calculated, the more expensive the detailed calculation becomes while the cost for the calculation of the reduced basis solution will stay in the same range, such that we would achieve even higher speed-ups.
In computations of practical relevance, the detailed and the reduced accuracies have to be adjusted as it is described in \cite{Y015}. Here we are mostly interested in the performance of the RB algorithm, and thus we work with a fixed moderate 
finite element resolution of 15402 DOFs.

\begin{table}[h]\hspace*{-2.5cm}
  \begin{tabular}{|c||c|c|c|c||c|c|c|c||c|c|c|c|}\hline
  $K$& \multicolumn{4}{|c||}{2} &\multicolumn{4}{|c||}{4}&\multicolumn{4}{|c|}{7} \\ \hline
  $N$&50&100&150&200&50&100&150&200&50&100&150&200\\ \hline 
 Detailed solution & \multicolumn{4}{|c||}{3.5} &\multicolumn{4}{|c||}{3.5}&\multicolumn{4}{|c|}{3.6}\\ \hline 
 Reduced solution & 0.025 & 0.028& - & - & 0.037 & 0.043 & 0.050 & 0.055 &  0.057 & 0.067 & 0.075 & 0.082   \\  \hline
 Reconstruction & 0.009 & 0.011 & - & - & 0.009 & 0.012 & 0.016 & 0.019 &  0.010 & 0.013 & 0.018 & 0.021    \\ \hline
 Speed-up & 140 &125 & - & - & 94 & 81 & 70 & 63 & 63 & 53 &48 & 43  \\ \hline
  \end{tabular}
  \vspace{2mm}
  \caption{Timings for the detailed solution and the online calculations
      (reduced solution including error estimation; reconstruction of the finite element solution from the reduced solution) in seconds
      and speed-up numbers
  }
  \label{tab:Balken_Speed_up}
\end{table}

As can be seen in Table \ref{tab:Balken_times}, the computation times for the error estimators ($\eta$) as well as for the required offline components for the error estimators, i.\,e., solution of (\ref{eq:xi_q_n}) and (\ref{eq:xi_0_n}) (``Assembly'') and computation of $\hat{A}$, increase for increasing values of $N$. In the case of $\hat{A}$, the increase is approximately linear. Note that these longer computations will only have to be performed in the offline phase and will not have any impact on the computation times for the online phase. The computation of $g(\mu)$ as defined in (\ref{eq:g_einfach}), which is necessary for the error estimator, takes $0.0042$ seconds. 

\begin{table}[h]
\begin{center}
 \begin{tabular}{|c|c|c|c|} \hline
  $N$ & $\eta$ & $\hat{A}$ & Assembly \\ \hline
    50 & 0.0028 & 1.3672 & 6.076 \\ \hline
  100 & 0.0037 & 2.7640 & 6.156 \\ \hline
  150 & 0.0040 & 4.1144 &  6.272\\ \hline
  200 & 0.0046 & 5.5995 &  6.312\\ \hline
 \end{tabular}
  \vspace{2mm}
   \caption{Timings for single components of the offline phase in seconds}
  \label{tab:Balken_times}
\end{center}
\end{table}

\subsection{Wall-slab configuration}\label{ssec:wallslab}

In this section, we show the ability of the newly developed reduced basis method to approximate multiple eigenvalues in a two-dimensional wall-slab configuration with a thin elastomer layer in between. The domain shape is an L-shape with three non-overlapping subdomains representing the wall, the elastomer and the slab, denoted by $\Omega_1$, $\Omega_2$ and $\Omega_3$, respectively. The corresponding domains are chosen as $\Omega_1=[0,1] \times [0,2.8]$, $\Omega_2=[0,1] \times [2.8,3]$ and $\Omega_3=[0,3] \times [3,4]$. We again used standard linear finite elements with 30702 DOFs for these calculations.

The material parameters $E$ and $\nu$ will again range from $10-100$ and $0.1-0.4$.
Since we aim for large numbers of eigenvalues, we perform our simulations for $\anzEW=20$.
Fig.~\ref{fig:wall_slab} shows that we do not only obtain very good convergence for the eigenvalues (left) but also for the corresponding eigenfunctions (right). 
The error curves chosen to be represented in Fig.~\ref{fig:wall_slab} are representative examples for the eigenvalue and eigenfunction errors in the wall-slab configuration, while the black lines
{
denote the minimum and the maximum of the averaged errors over the $\mu \in \Xi_\mathrm{test}$, respectively.}

\begin{figure}[h]
\begin{center}
\includegraphics[width=.42\textwidth]{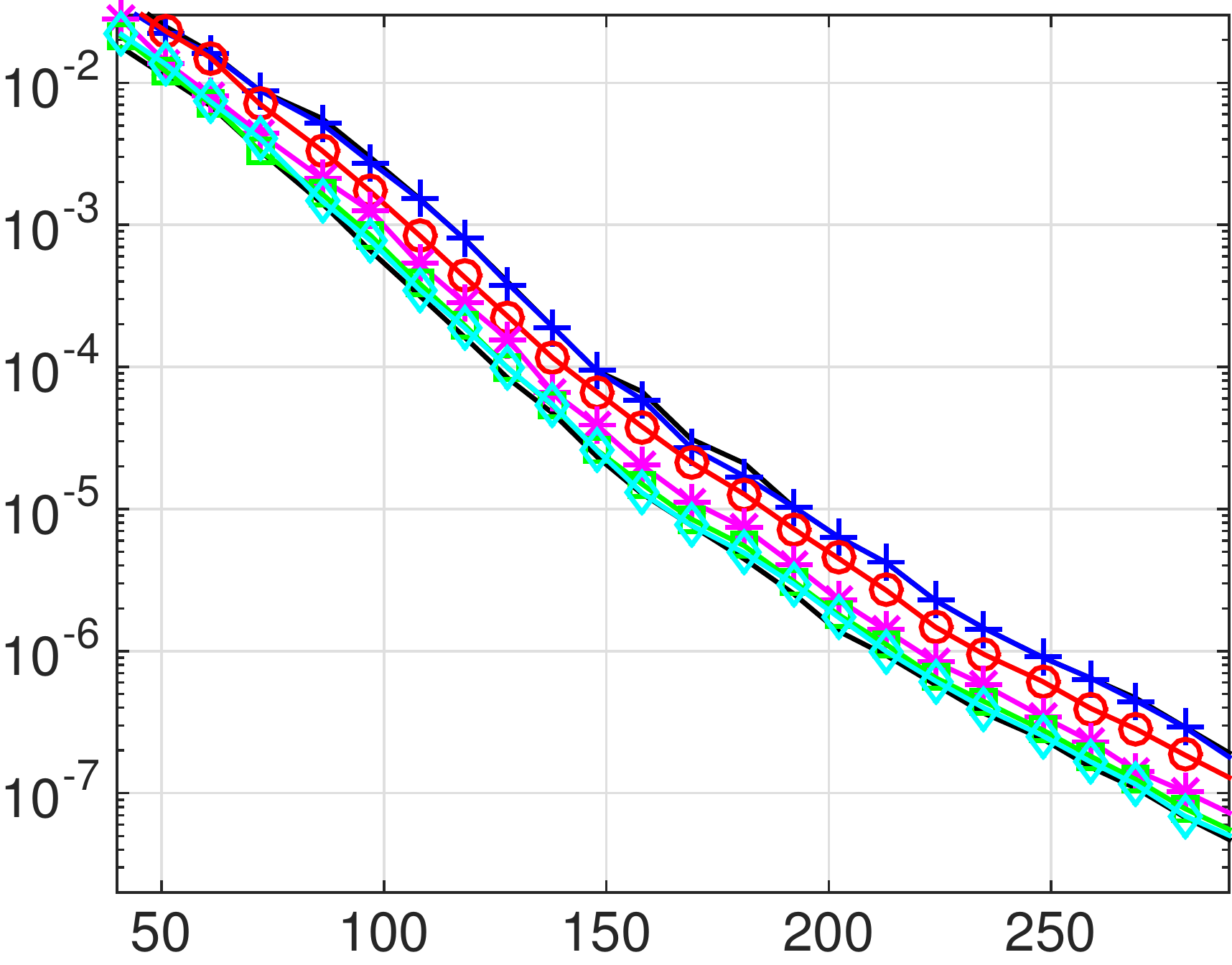}\hspace*{2mm} 
\includegraphics[width=.1\textwidth]{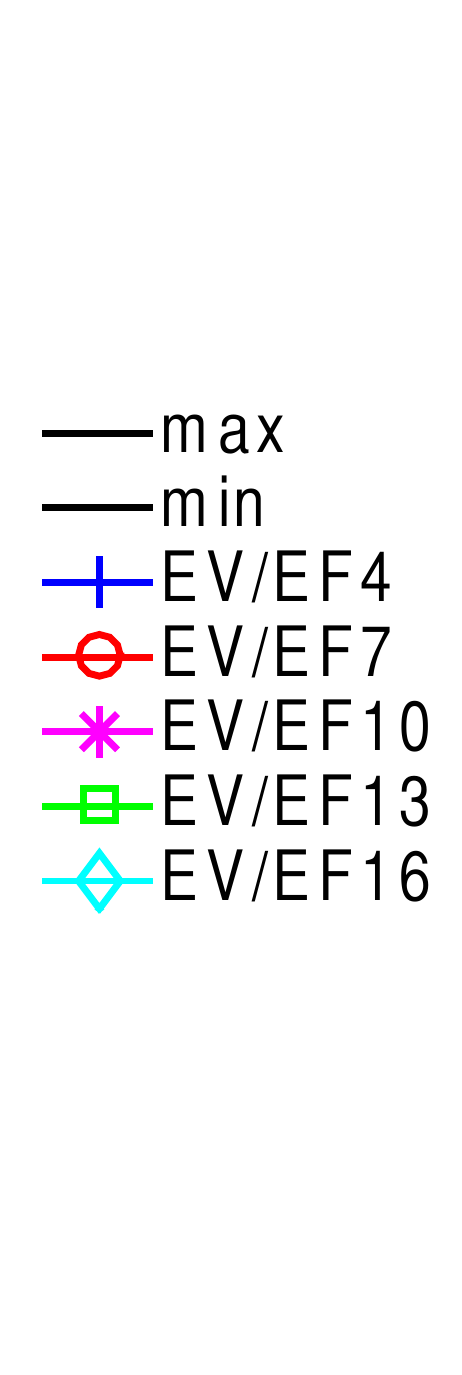} 
\hspace*{2mm}
\includegraphics[width=.42\textwidth]{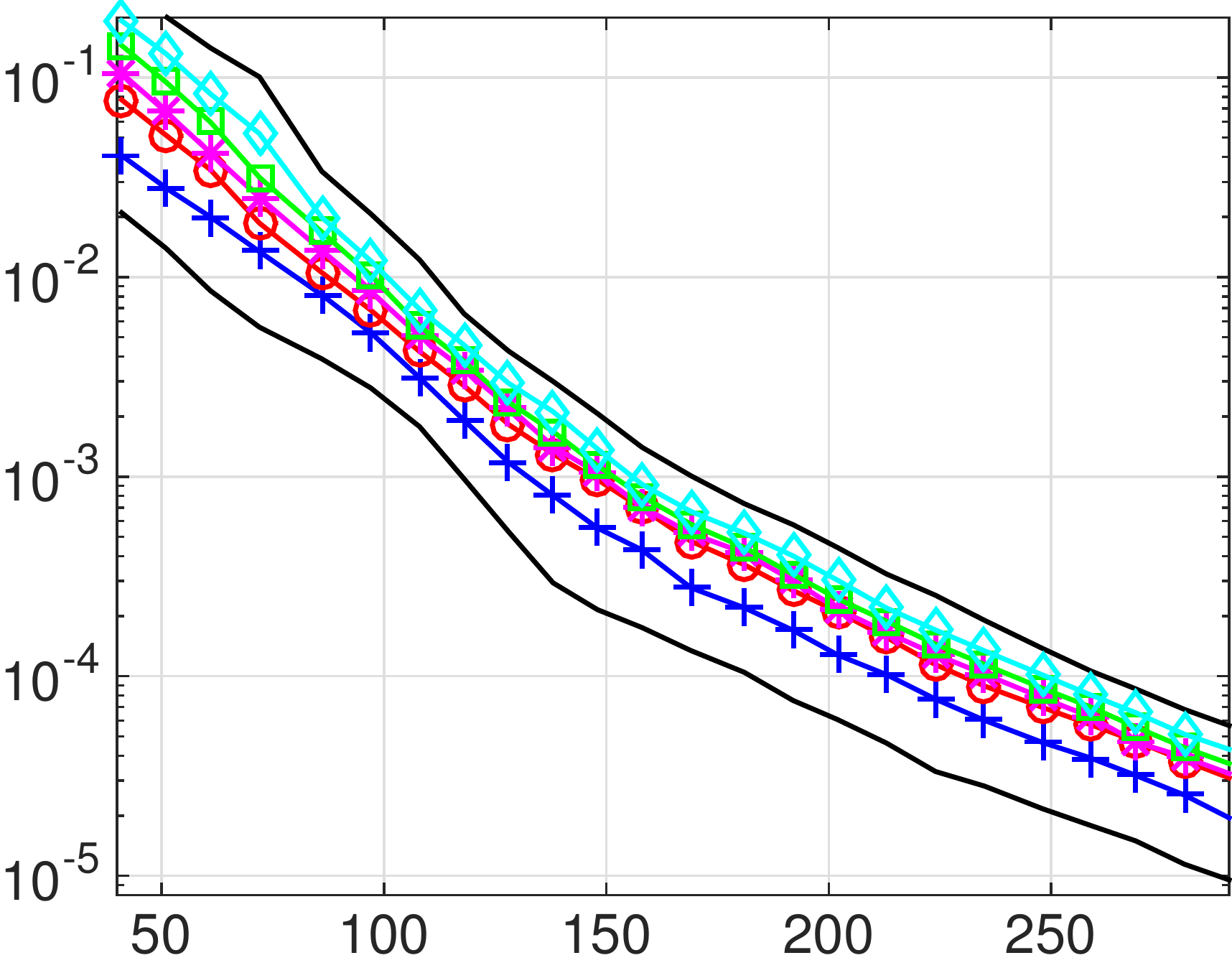}
\end{center}
\caption{Wall-slab configuration with thin elastomer: RB error of eigenvalues (left) and eigenfunctions (right)}
\label{fig:wall_slab}
\end{figure}
The speed-up is similar to the one analyzed in detail in Sect.~\ref{ssec:run_time}.
For the wall-slab configuration, we show in Table \ref{tab:L_speed_up} the computation times in the case of $K=20$ eigenvalues. As can be seen the computation of the detailed solution takes $14.03$ seconds, while the computations of the reduced solutions take between $0.14$ and $0.24$ seconds, depending on the basis size $N$. This results in a speed-up of $100$ for $N=50$ to $58$ if we take $N=300$ for an accuracy of $10^{-7}$.

\begin{table}[h]
\begin{center}
\begin{tabular}{|c||c|c|c|c|c|c|} \hline
$K$&\multicolumn{6}{|c|}{20} \\ \hline
$N$&50&100&150&200&250&300 \\ \hline
Detailed solution&  \multicolumn{6}{|c|}{14.03}\\ \hline
Reduced solution& 0.14& 0.16&0.18& 0.19&0.22&0.24 \\ \hline
Reconstruction& 0.027&0.035&0.041&0.049&0.054&0.065  \\ \hline 
Speed-up& 100&87&78&73&63&58 \\ \hline
\end{tabular}
\vspace{2mm}
\caption{Timings for the detailed solution and the online calculations 
      (reduced solution including error estimation; reconstruction of the finite element solution from the reduced solution)
       for a slab-wall configuration in seconds}
\label{tab:L_speed_up}
\end{center}
\end{table}

\subsection{Three-dimensional example: First floor building}\label{ssec:house}

{
Since we aim to apply our results to the modal analysis for vibro-accoustics of laminated timber structures, as they occur in modern timber buildings, we test the performance of our method on a three-dimensional geometry representing the first floor of a building. 
Although wooden structures consist of orthotropic materials, we will use isotropic material parameters for the ease of computation.

Usually different materials are used in the construction of a building. In this case, we have three different materials for the walls. More precisely, we assume that the outer walls are subdomain one, which consists of one material and that the interior walls can be divided into two more subdomains, namely ordinary walls and load-bearing walls.
Fig.~\ref{fig:house_geo} depicts our geometry and the corresponding domains.
The material parameters $E$ and $\nu$ range from $100-1000$ and $0.1-0.4$.
We perform our simulations for $\anzEW=10$ and use standard finite elements with $20994$ degrees of freedom.

}
\begin{figure}[h]
\begin{center}
\includegraphics[width=.3\textwidth]{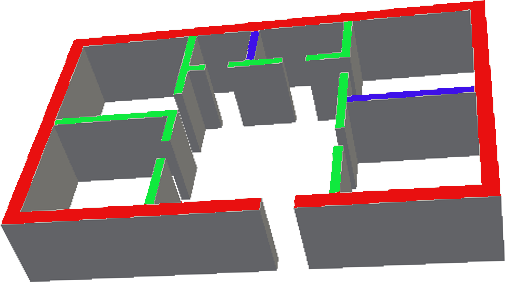}
\end{center}
\caption{Geometry and subdomains: outer walls in red, inner walls load-bearing in green, inner ordinary walls in blue}
\label{fig:house_geo}
\end{figure}

{
The first row in Fig.~\ref{fig:house_eigenvectors} represents the first eigenfunctions for three different parameter sets while the second row represents the corresponding fourth eigenfunctions. We used the parameter sets $\mu^1=(200,0.1,800,0.3,400,0.2)$, $\mu^2=(650,0.36,150,0.25,900,0.11)$ and $\mu^3=(800,0.3,500,0.1,200,0.4)$. It can be observed that the eigenfunctions change significantly depending on the parameters while still being approximated very well by our method.
Fig.~\ref{fig:house_conv} shows the error decay for the $k$-th eigenvalues (left) and eigenfunctions (right), $k\in \{1,3,5,7,9 \}$, as well as the minimum and maximum averaged errors. We again obtain very good convergence.
}

\begin{figure}[h]
\begin{center}
\includegraphics[width=.3\textwidth]{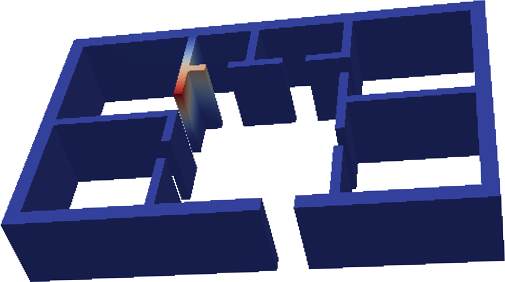}
\includegraphics[width=.3\textwidth]{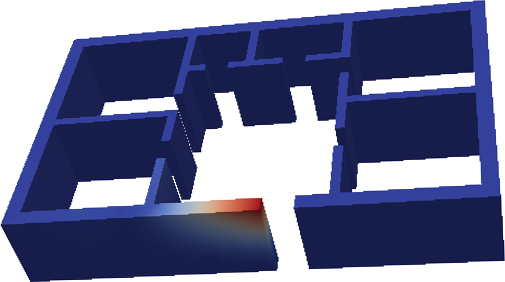}
\includegraphics[width=.3\textwidth]{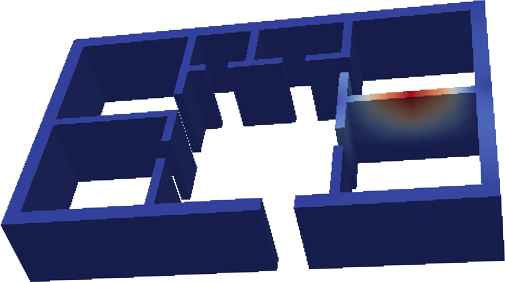}\\
\includegraphics[width=.3\textwidth]{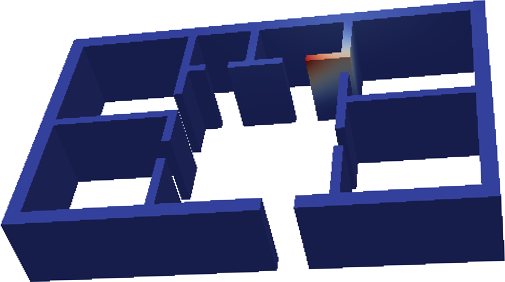}
\includegraphics[width=.3\textwidth]{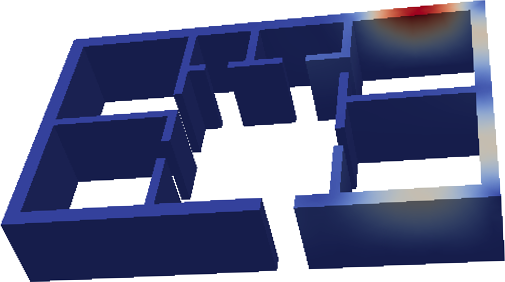}
\includegraphics[width=.3\textwidth]{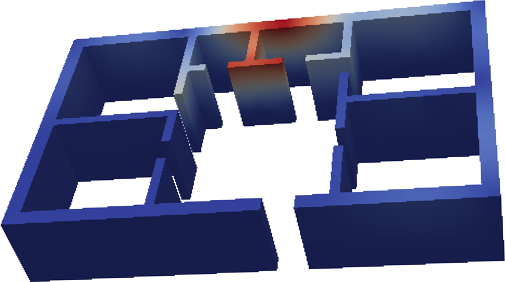}
\end{center}
\caption{Behaviour of the eigenfunctions depending on parameter variations. Top row depicts the first eigenfunction and bottom row the fourth eigenfunction.}
\label{fig:house_eigenvectors}
\end{figure}

\begin{figure}[h]
\begin{center}
\includegraphics[width=.42\textwidth]{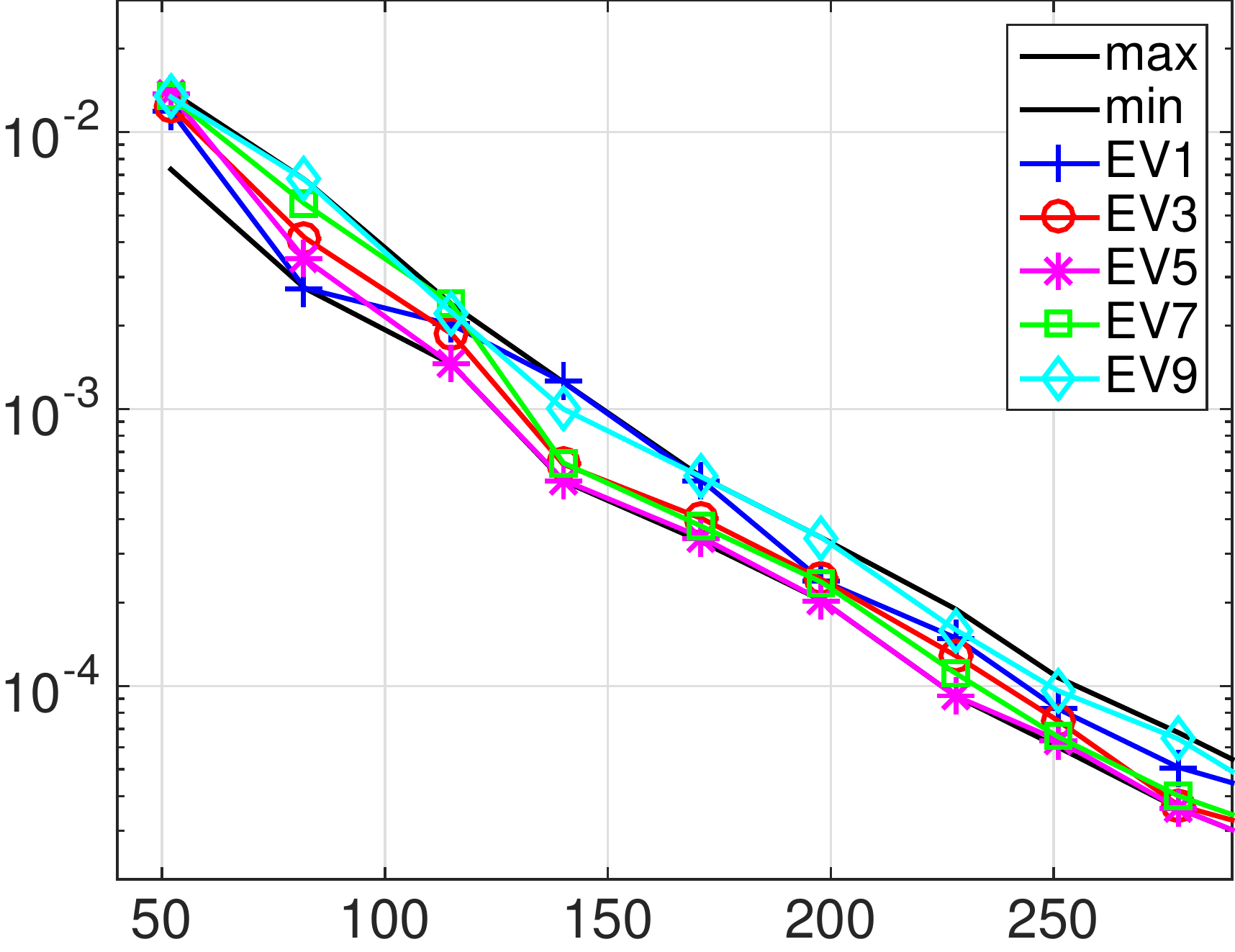}\hspace*{2mm} 
\includegraphics[width=.42\textwidth]{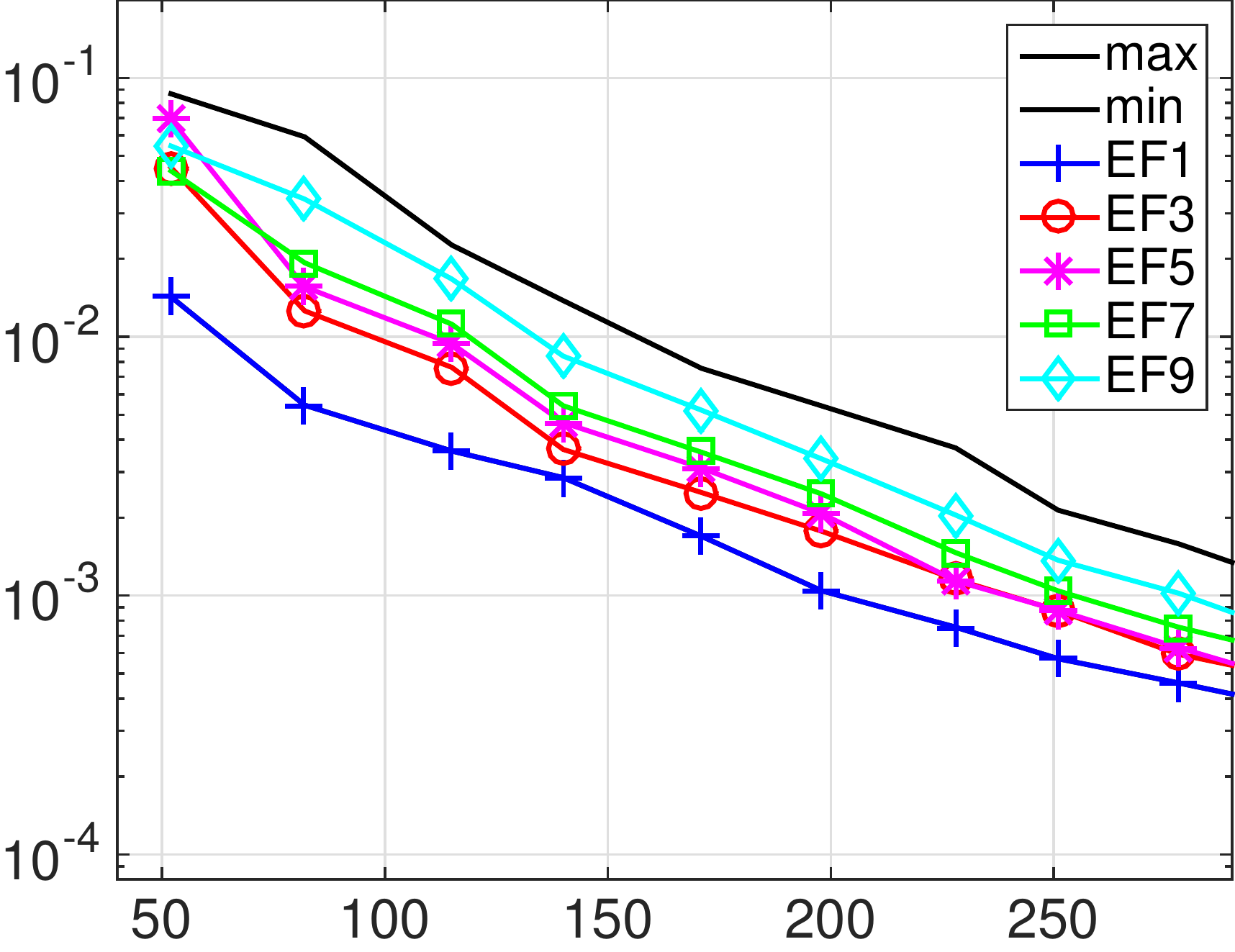}
\end{center}
\caption{First floor configuration: RB error of eigenvalues (left) and eigenfunctions (right)}
\label{fig:house_conv}
\end{figure}

{
The speed-up in the three-dimensional setting is even more significant.
For the first floor of the building, we show in Table \ref{tab:house_speed_up} the computation times for $K=10$ eigenvalues. We observe that the computation of the detailed solution takes $31.59$ seconds, while the computations of the reduced solutions take between $0.084$ and $0.142$ seconds, depending on the basis size $N$. This results in a speed-up of $376$ for $N=50$ to $222$ if we take $N=300$ for an accuracy of $10^{-5}$.
}

\begin{table}[h]
\begin{center}
\begin{tabular}{|c||c|c|c|c|c|c|} \hline
$K$&\multicolumn{6}{|c|}{10} \\ \hline
$N$&50&100&150&200&250&300 \\ \hline
Detailed solution&  \multicolumn{6}{|c|}{ 31.59 }\\ \hline
Reduced solution& 0.084 & 0.096 &0.102& 0.111 &  0.125 & 0.142\\ \hline
Reconstruction& 0.021 &   0.027&0.031& 0.035 &  0.040 &  0.046\\ \hline 
Speed-up&          376   &  329  & 309  & 284 &  252 & 222\\ \hline
\end{tabular}
\vspace{2mm}
\caption{Timings for the detailed solution and the online calculations for the first floor in seconds}
\label{tab:house_speed_up}
\end{center}
\end{table}

\section{Conclusion}
\label{sec:conclusion}
In this paper, we have developed a model reduction framework for 
parameterized {elliptic} eigenvalue problems
and applied it numerically to linear elasticity.
We have derived an asymptotically reliable error estimator for eigenvalues, even for higher multiplicities,
that also facilitates an online-offline decomposition.
Several eigenvalues have been approximated simultaneously by
a single reduced space for the variational approximation of the eigenvalue problem.
Altogether we achieve very effective tailored greedy strategies
for the construction of efficient reduced basis spaces
for the simultaneous approximation of many eigenvalues.


\pagebreak[3]

\bibliographystyle{siam}
\nocite{*}
{\small
\bibliography{bibliographyfile_RB_EWP}
}

\end{document}